\date{}
\begin{document}
\title{Extragradient algorithms for split equilibrium problem and nonexpansive mapping}


\author{Bui Van Dinh \and Dang Xuan Son \and Tran Viet Anh}
 %


\institute{Bui Van Dinh \at
              Faculty of Information Technology, Le Quy Don Technical University, Hanoi, Vietnam.\\
                      \email{vandinhb@gmail.com}\\        
             \emph{Present address:} Department of Applied Mathematics, Pukyong National University, Busan, Korea.  
           \and
           Dang Xuan Son \at
              Technical University, Hanoi, Vietnam.
              \email{dangsonmath@gmail.com}
              \and Tran Viet Anh \at
              Department of Scientific Fundamentals, Posts and Telecommunications Institute of Technology, Hanoi, Vietnam.
              \email{tranvietanh@outlook.com} }
\date{Received: date / Accepted: date}
\maketitle

\begin{abstract}
In this paper, we propose new extragradient algorithms for solving a split equilibrium and nonexpansive mapping SEPNM($C, Q, A, f, g, S, T)$ where $C, Q$ are nonempty closed convex subsets in real Hilbert spaces $\mathcal{H}_1, \mathcal{H}_2 $ respectively, $A : \mathcal{H}_1 \to \mathcal{H}_2$ is a bounded linear operator, $f$ is a pseudomonotone bifunction on $C$ and $g$ is a monotone bifunction on $Q$, $S, T$ are nonexpansive mappings on $C$ and $Q$ respectively. By using extragradient method combining with cutting techniques, we obtain algorithms for the problem. Under certain conditions on parameters, the iteration sequences generated by the algorithms are proved to be weakly and strongly convergent to a solution of this problem.

\keywords{split equilibrium problem \and common fixed point problem \and nonexpansive mapping \and weak and strong convergence \and  projection method \and pseudo-monotonicity}
 \subclass{$47$H$09$ \and $47$J$25$ \and $65$K$10$ \and $65$K$15$ \and $90$C$99$}
\end{abstract}

\section{Introduction}

Throughout the paper, unless otherwise stated, let we assume that $\mathcal{H}_1$ and $\mathcal{H}_2$ are real Hilbert spaces with inner product denoted by $\langle \cdot , \cdot \rangle$ and associated norm $\| \cdot \|$. By `$\to$' we denote the strong convergence. While `$\rightharpoonup$' stands for the weak convergence.
 Let $C$ be a nonempty closed convex subset of $\mathcal{H}_1$, $Q$ a nonempty closed convex subset of $\mathcal{H}_2$, $A: \mathcal{H}_1 \to  \mathcal{H}_2$ a bounded linear operator. Let $f : C \times C \to \mathbb{R}$ be an equilibrium bifunction, i.e., $f(x, x) = 0$ for every $x \in C$, $g : Q \times Q \to \mathbb{R}$ be an equilibrium bifunction, $S: C \to C$,  and $T: Q \to Q$, are nonexpansive mappings. We consider the following split equilibrium problem and nonexpansive mapping SEPNM($C, Q, A, f, g, S, T)$ ((SEPNM) for short):
\begin{equation} \label{1.1}
 \text{Find } x^* \in C \text{ such that} \begin{cases}
x^* \in \text{Sol}(C, f) \cap \text{Fix}(S)\\
Ax^* \in \text{Sol}(Q, g) \cap \text{Fix}(T),
\end{cases}
\end{equation}
where $\text{Sol}(C, f)$ is the solution set of the following equilibrium problem (shortly EP$(C, f)$)
\begin{equation}\label{1.2}
\text{Find } \bar{x} \in C \text{ such that }  f(\bar{x}, y) \geq 0, \  \forall y \in C.
\end{equation}
$\text{Sol}(Q, g)$ is the solution set of the equilibrium problem EP$(Q, g)$, i.e.,
$$\text{Sol}(Q, g) =\{ \bar{u} \in Q : \ g(\bar{u}, v) \geq 0, \  \forall v \in Q\}.$$
Fix$(S)$ is the fixed point set of mapping $S$, i.e., $\text{Fix}(S) =  \{\bar{x} \in C: S(\bar{x}) = \bar{x} \}$,
and Fix$(T)$ is the fixed point set of mapping $T$, i.e., $ \text{Fix}(T) =  \{\bar{u} \in Q : T(\bar{u}) = \bar{u} \}.$

If $S \equiv I_{\mathcal{H}_1}$, the identity mapping in $\mathcal{H}_1$, and $T \equiv I_{\mathcal{H}_2}$, the identity mapping in $\mathcal{H}_2$, then problem (SEPNM) becomes the following split equilibrium problem (shortly (SEP)):
\begin{equation} \label{1.2}
 \text{Find } x^* \in C \text{ such that } \begin{cases}
x^* \in \text{Sol}(C, f) \\
Ax^* \in \text{Sol}(Q, g),
\end{cases}
\end{equation}
which was introduced by A. Moudafi \cite{Mou1} (see also \cite{DKK,He,KR2}). When $f(x, y) = \langle F(x), y-x\rangle$; $g(u, v) = \langle G(u), v-u \rangle$, with some mappings $F: C \to \mathcal{H}_1$, $G: Q \to \mathcal{H}_2$, (SEP) reduces to the split variational inequality problem (SVIP):
\begin{equation} \label{1.3}
 \text{Find } x^* \in C \text{ such that } \begin{cases}
x^* \text{ solves } \  \text{VIP}(C, F) \\
Ax^* \text{ solves} \ \text{VIP}(Q, G),
\end{cases}
\end{equation}
that was investigated by Y. Censor et al. \cite{CGR}. If $f \equiv 0$ and $g \equiv 0$, then (SEPNM) becomes split common fixed point problem (SFPP), that is to find
\begin{equation}\label{1.4}
x^* \in \text{Fix}(S) \text{ such that }  Ax^* \in \text{Fix}(T),
\end{equation}
which was introduced by A. Moudafi \cite{Mou2} (see also \cite{CS,KS}).

 These problems could be considered as generalizations of split feasibility problem (SFP) which was considered by Y. Censor et al. \cite{CE}. Many iterative methods have been investigated for solving (SFP) and related problems \cite{BB,Byr,CW,DG,LWX}.

For obtaining a solution of (SEP), Z. He \cite{He} introduced an iterative method, which generates a sequence $\{x^k\}$ by
\begin{equation}
\begin{cases}
x^0 \in C; \{r_k\} \subset (0, +\infty); \mu > 0,\\
f(y^k, y) + \displaystyle\frac{1}{r_k} \langle y - y^k, y^k - x^k \rangle \geq 0, \ \forall y \in C, \\
g(u^k, v) + \displaystyle\frac{1}{r_k} \langle v - u^k, u^k - Ay^k \rangle \geq 0, \  \forall v \in Q,\\
x^{k+1} = P_C(y^k + \mu A^*(u^k - Ay^k)), \ \forall k \geq 0.
\end{cases}
\end{equation}

Under certain conditions on parameters, the author shows that $\{x^k\}, \  \{y^k\}$ weakly converges to a solution of (SEP) provided that $f, g$ are monotone bifunctions on $C$ and $Q$ respectively.

To find a common element of the set of fixed points of a nonexpansive mapping and the set of solutions of a pseudomonotone and Lipschitz-type continuous equilibrium problem, P. N. Anh \cite{Anh} proposed to use the sequence which is a combination of extragradient algorithm for equilibrium problem \cite{MQH}, (see also \cite{DM,FP,Kor1}, for more detail extragradient algorithms) and the iterative scheme by the viscosity approximation method \cite{TT},  defined by

\begin{equation}\label{1.5}
\begin{cases}
x^0 \in C; \{\lambda_k\} \subset (0, +\infty),\\
y^k = \arg\min\{\lambda_k f(x^k, y) + \displaystyle\frac{1}{2} \|y - x^k\|: \  y \in C\}, \\
z^k = \arg\min\{\lambda_k f(y^k, y) + \displaystyle\frac{1}{  2} \|y - x^k\|: \  y \in C\},\\
x^{k+1} = \lambda_kx^0 + (1-\lambda_k)S(z^k), \ \forall k \geq 0,
\end{cases}
\end{equation}

It was proved in \cite{Anh} that if $\lim\limits_{k \to \infty}\|x^{k+1} - x^k\| = 0$, then the $ \{x^k\}, \ \{y^k\}, \ \{z^k\}$ generated by (\ref{1.5}) strongly converge to the same point $P_{\text{Sol}(C, f) \cap \text{Fix}(S)}(x^0)$. One advantage of this algorithm is that it could be applied for pseudomonotone equilibrium problems and at each iteration we only solve two strongly convex optimization problems instead of a regularized equilibrium problem.

 Motivated by above papers and recent works \cite{DKK,KR1,KR2,VSN}, in this paper, we present algorithms for (SEPNM) when $f$ is pseudomonotone and $g$ is monotone, $S, \ T$ are nonexpansive mappings. The rest of paper organizes as follows. In the next Section, some preliminary results are recalled. The weak convergence theorem and its corollary are presented in Section 3. In the last Section, we combine the method presented in Section 3 with the hybrid projection method for obtaining the strong convergence theorem.  A special case of (SEPNM)  is also considered.
 
\section{Preliminaries}
Let $\mathcal{H}$ be a real Hilbert space with an inner product $\langle\cdot,\cdot\rangle$ and induced norm $\Vert \cdot \Vert$, and let $C$ be
a nonempty closed convex subset of $\mathcal{H}$.
By $P_C$, we denote the metric projection operator onto $C$, that is
\begin{equation*}
 P_C(x) \in C: \Vert x - P_C(x)\Vert \leq  \Vert x - y \Vert, \  \forall y \in C.
\end{equation*}

The following well known results will be used in the sequel.

\begin{lemma}\label{2.1} Suppose that $C$ is a nonempty closed convex subset in $\mathcal{H}$. Then $P_C$ has following properties:
\begin{itemize}
\item[(a)] $P_C(x)$ is singleton and well defined for every x;
\item[(b)] $z=P_C(x)$ if and only if $\langle x-z, y-z\rangle\leq 0$, \ $\forall y\in C$;
\item[(c)] $\Vert P_C(x)-P_C(y) \Vert^2 \leq \langle P_C(x) - P_C(y), x-y \rangle, \ \forall x, y \in \mathcal{H}$;
\item[(d)] $\Vert P_C(x)-P_C(y)\Vert^2 \leq \Vert x-y \Vert^2 - \Vert x-P_C(x) - y + P_C(y) \Vert^2, \ \forall x, y \in \mathcal{H}$.
\end{itemize}
\end{lemma}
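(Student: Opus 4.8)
All four assertions are classical properties of the metric projection in a Hilbert space, and the plan is to prove them in the listed order, each one supplying what the next one needs. For (a), fix $x \in \mathcal{H}$, put $d := \inf_{y \in C}\|x-y\|$, and take a minimizing sequence $\{y_n\} \subset C$; applying the parallelogram law to $x-y_n$ and $x-y_m$ and using $\tfrac{1}{2}(y_n+y_m) \in C$ (convexity) gives
\[
\|y_n-y_m\|^2 \;=\; 2\|x-y_n\|^2 + 2\|x-y_m\|^2 - 4\Big\|x - \tfrac{y_n+y_m}{2}\Big\|^2 \;\le\; 2\|x-y_n\|^2 + 2\|x-y_m\|^2 - 4d^2 ,
\]
so $\{y_n\}$ is Cauchy; completeness of $\mathcal{H}$ and closedness of $C$ then yield a limit $z \in C$ with $\|x-z\| = d$, while the same identity applied to two minimizers forces them to coincide. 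This is the one place where completeness and closedness enter, and it carries essentially all the analytic content.

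For (b), if $z = P_C(x)$ then for every $y \in C$ and $t \in (0,1]$ convexity gives $z + t(y-z) \in C$, so $t \mapsto \|x - z - t(y-z)\|^2$ is minimized over $[0,1]$ at $t = 0$; differentiating from the right at $t = 0$ gives $-2\langle x-z,\, y-z\rangle \ge 0$, i.e.\ $\langle x-z,\, y-z\rangle \le 0$. Conversely, if this inequality holds for all $y \in C$, expanding $\|x-y\|^2 = \|x-z\|^2 + 2\langle x-z,\, z-y\rangle + \|z-y\|^2 \ge \|x-z\|^2$ shows $z = P_C(x)$ by (a).

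For (c), write $z_1 = P_C(x)$, $z_2 = P_C(y)$; applying (b) at $x$ with test point $z_2$ and at $y$ with test point $z_1$ gives $\langle x-z_1,\, z_2-z_1\rangle \le 0$ and $\langle y-z_2,\, z_1-z_2\rangle \le 0$, and adding these and rearranging yields $\|z_1-z_2\|^2 \le \langle z_1-z_2,\, x-y\rangle$. Finally, for (d), set $a := P_C(x) - P_C(y)$ and $b := (x - P_C(x)) - (y - P_C(y))$, so that $x - y = a + b$ and $\|x-y\|^2 = \|a\|^2 + 2\langle a,\, b\rangle + \|b\|^2$; since $\langle a,\, b\rangle = \langle a,\, x-y\rangle - \|a\|^2 \ge 0$ by (c), discarding the nonnegative middle term gives $\|a\|^2 \le \|x-y\|^2 - \|b\|^2$, which is exactly (d). Beyond the Cauchy-sequence estimate in (a), I do not anticipate any genuine obstacle.
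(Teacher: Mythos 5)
Your proof is correct and complete: the parallelogram-law argument for existence and uniqueness in (a), the variational characterization in (b), the addition of the two inequalities from (b) to get firm nonexpansiveness in (c), and the orthogonal-type decomposition $x-y=a+b$ with $\langle a,b\rangle\ge 0$ for (d) are all the standard arguments and each step checks out. The paper itself offers no proof — Lemma~\ref{2.1} is cited there as a well-known fact — so there is nothing to compare against; your write-up would serve as a valid self-contained justification.
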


\begin{lemma}\label{2.5}
Let $\mathcal{H}$ be a real Hilbert space, then for all $x, y\in \mathcal{H}$ and $\alpha\in [0, 1]$, we have
$$\Vert \alpha x + (1-\alpha)y \Vert^2 = \alpha \Vert x \Vert^2+ (1-\alpha) \Vert y \Vert^2-\alpha(1-\alpha)\Vert x-y\Vert^2.$$
\end{lemma}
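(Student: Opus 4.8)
The plan is to verify the identity by a straightforward expansion using the bilinearity and symmetry of the inner product. First I would rewrite the left-hand side as $\langle \alpha x + (1-\alpha)y,\ \alpha x + (1-\alpha)y\rangle$ and expand it, obtaining $\alpha^2\Vert x\Vert^2 + 2\alpha(1-\alpha)\langle x,y\rangle + (1-\alpha)^2\Vert y\Vert^2$.

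Next I would expand the last term on the right-hand side via $\Vert x-y\Vert^2 = \Vert x\Vert^2 - 2\langle x,y\rangle + \Vert y\Vert^2$, so that the right-hand side becomes $\alpha\Vert x\Vert^2 + (1-\alpha)\Vert y\Vert^2 - \alpha(1-\alpha)\Vert x\Vert^2 + 2\alpha(1-\alpha)\langle x,y\rangle - \alpha(1-\alpha)\Vert y\Vert^2$. Collecting the coefficient of $\Vert x\Vert^2$ gives $\alpha\bigl(1-(1-\alpha)\bigr)=\alpha^2$, the coefficient of $\Vert y\Vert^2$ gives $(1-\alpha)(1-\alpha)=(1-\alpha)^2$, and the cross term is unchanged; this agrees with the expansion of the left-hand side, so the two sides are equal.

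Alternatively, to avoid expanding the right-hand side one could write $\alpha x + (1-\alpha)y = y + \alpha(x-y)$, expand $\Vert y + \alpha(x-y)\Vert^2 = \Vert y\Vert^2 + 2\alpha\langle y,x-y\rangle + \alpha^2\Vert x-y\Vert^2$, and then substitute $2\langle y,x-y\rangle = \Vert x\Vert^2 - \Vert y\Vert^2 - \Vert x-y\Vert^2$; the coefficient of $\Vert x-y\Vert^2$ then combines to $\alpha^2 - \alpha = -\alpha(1-\alpha)$, which is the claimed form.

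There is essentially no genuine obstacle here: this is a standard Hilbert-space identity, and the only thing requiring care is the bookkeeping of the scalar coefficients $\alpha$, $1-\alpha$, $\alpha^2$, $(1-\alpha)^2$, and $\alpha(1-\alpha)$. No assumption beyond the inner-product structure is used, and the restriction $\alpha\in[0,1]$ is immaterial to the algebra, so the identity in fact holds for every real $\alpha$.
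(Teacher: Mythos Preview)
Your proof is correct; this is the standard expansion via bilinearity of the inner product, and both of your routes are clean and valid. The paper itself states Lemma~\ref{2.5} without proof, treating it as a well-known preliminary identity, so there is no alternative argument to compare against.
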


\begin{lemma}[Opial's condition]\label{2.4}
For any sequence $\{x^k\}\subset \mathcal{H}$ with $x^k\rightharpoonup x$, the inequality
$$\liminf\limits_{k\longrightarrow+\infty}\Vert x^k-x \Vert < \liminf\limits_{k \longrightarrow +\infty} \Vert x^k-y \Vert$$
holds for each $y\in \mathcal{H}$ with $y \neq x$.
\end{lemma}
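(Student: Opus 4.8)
The plan is to reduce everything to the expansion of the squared norm and to exploit the fact that weak convergence annihilates the relevant cross term. As a preliminary observation I would note that a weakly convergent sequence is norm-bounded (by the uniform boundedness principle), so that all the quantities $\liminf_{k\to\infty}\|x^k-x\|$ and $\liminf_{k\to\infty}\|x^k-y\|$ are finite nonnegative real numbers and no degenerate case arises.

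Next, fix $y\in\mathcal{H}$ with $y\neq x$. For every $k$ one has the identity
$$\|x^k-y\|^2 = \|x^k-x\|^2 + 2\langle x^k-x,\, x-y\rangle + \|x-y\|^2.$$
Since $x^k\rightharpoonup x$, the inner-product term $\langle x^k-x,\,x-y\rangle$ tends to $0$, so the sum of the last two terms converges to $\|x-y\|^2$. Using the elementary fact that $\liminf_{k}(a_k+b_k)=\liminf_{k}a_k+\lim_{k}b_k$ whenever $\{b_k\}$ is convergent, I obtain
$$\liminf_{k\to\infty}\|x^k-y\|^2 = \liminf_{k\to\infty}\|x^k-x\|^2 + \|x-y\|^2.$$
Because $y\neq x$ forces $\|x-y\|^2>0$, this already gives the strict inequality $\liminf_{k\to\infty}\|x^k-x\|^2 < \liminf_{k\to\infty}\|x^k-y\|^2$.

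Finally I would pass from squared norms back to norms. Since $t\mapsto t^2$ is continuous and strictly increasing on $[0,\infty)$, it commutes with $\liminf$ of a nonnegative sequence, i.e. $\liminf_{k\to\infty}\|x^k-z\|^2=\bigl(\liminf_{k\to\infty}\|x^k-z\|\bigr)^2$ for $z\in\{x,y\}$, and it reflects strict inequalities between nonnegative numbers. Combining these observations yields $\liminf_{k\to\infty}\|x^k-x\| < \liminf_{k\to\infty}\|x^k-y\|$, which is the assertion.

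Honestly, there is no serious obstacle here; the only points requiring a little care are the legitimacy of splitting the $\liminf$ across the sum (which works precisely because the cross term plus the constant converges, not merely has a $\liminf$) and the boundedness of $\{x^k\}$ needed to keep the relevant $\liminf$s finite. Everything else is just bilinearity of the inner product together with monotonicity of squaring.
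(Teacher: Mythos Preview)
Your argument is correct and is in fact the standard proof of Opial's lemma: expand $\|x^k-y\|^2$ around $x$, use weak convergence to kill the cross term, split the $\liminf$ using convergence of the remainder, and then pass from squared norms back to norms via monotonicity of $t\mapsto t^2$ on $[0,\infty)$. All the care points you flag (boundedness of weakly convergent sequences, the $\liminf$ of a sum with a convergent summand) are handled properly.

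As for comparison with the paper: there is nothing to compare. The paper states Opial's condition as a well-known preliminary result (Lemma~\ref{2.4}) and gives no proof; it is simply invoked later in the convergence arguments. Your write-up therefore supplies what the paper omits.
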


Now, we assume that the bifunctions  $g : Q \times Q \to \Bbb{R}$ and $f: C \times C \to \Bbb{R}$ satisfy the following assumptions, respectively:\\
{\bf Assumptions $A$}
\begin{itemize}
\item[$(A_1)$] $g(u, u) = 0$, for all $u \in Q$;
\item[$(A_2)$] $g$ is monotone on $Q$, i.e., $g(u, v) + g(v, u) \leq 0$, for all $u, v \in Q$;
\item[$(A_3)$] for each $u, v, w \in Q$
$$\lim\sup\limits_{\lambda\downarrow 0}g(\lambda w+(1-\lambda)u, v)\leq g(u, v);$$
\item[$(A_4)$] $g(u, . )$ is convex and lower semicontinuous on $Q$ for each $u \in Q$.
\end{itemize}
{\bf Assumptions $B$}
\begin{itemize}
\item[$(B_1)$] $f(x, x)=0$ for all $x\in C$;
\item[$(B_2)$] $f$ is pseudomonotone on $C$, i.e., if $f(x, y)\geq 0$ implies $f(y, x)\leq 0$, for all $x, y\in C$;
\item[$(B_3)$] $f$ is jointly weakly continuous on $C\times C$ in the sense that, if $x, y\in C$ and $\{x^k\}, \{y^k\}\subset C$ converge weakly to $x$ and $y$, respectively, then $f(x^k, y^k) \to f(x, y)$ as $k \to +\infty$;
\item[$(B_4)$]  $f(x, \cdot)$ is convex, subdifferentiable on $C$, for all $x\in C$;
\item [$(B_5)$] $f$ is Lipschitz-type continuous on $C$ with constants $c_1>0$ and $c_2>0$, i.e.,
$$f(x, y)+f(y, z) \geq f(x, z)-c_1 \Vert x-y \Vert^2-c_2 \Vert y-z \Vert^2,\;\forall x, y, z \in C.$$
\end{itemize}

The following lemmas are well known in theory of monotone equilibrium problems.

\begin{lemma}(\cite{BO})\label{2.2}\\
Let $g$ satisfy Assumption $A$. Then, for each $\alpha>0$ and $u \in \mathcal{H}$, there exists $w \in Q$ such that
$$g(w, v)+\displaystyle\frac{1}{\alpha}\langle v-w, w-u\rangle\geq 0, \;\; \forall v \in Q.$$
\end{lemma}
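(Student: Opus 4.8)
\noindent\emph{Plan of proof.}
The idea is to read the asserted inequality as the statement that the regularized bifunction
$$h(w,v):=g(w,v)+\frac{1}{\alpha}\langle v-w,\,w-u\rangle$$
possesses a solution of the equilibrium problem $\mathrm{EP}(Q,h)$, and to produce such a solution from the Fan--KKM existence theory for monotone equilibrium problems (this is exactly the Blum--Oettli resolvent existence result). So the first task is to verify that $h$ inherits from $g$ all the ingredients of such an existence theorem: $h(w,w)=0$ for every $w\in Q$ is clear from $(A_1)$; $h(w,\cdot)$ is convex and lower semicontinuous for each $w$ by $(A_4)$, because the added term $v\mapsto\frac1\alpha\langle v-w,w-u\rangle$ is affine and continuous; the upper hemicontinuity condition $\limsup_{\lambda\downarrow0}h(\lambda z+(1-\lambda)w,v)\le h(w,v)$ follows from $(A_3)$ together with continuity of the added term in its first argument; and $h$ is monotone, since summing $h(w,v)+h(v,w)$ the $g$-part is $\le 0$ by $(A_2)$ and the bilinear part equals $\frac1\alpha\langle v-w,(w-u)-(v-u)\rangle=-\frac1\alpha\|v-w\|^2$, so that in fact $h(w,v)+h(v,w)\le-\frac1\alpha\|v-w\|^2\le0$.

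Next I would install the coercivity condition; this is the only step that really uses the regularization and the only place where the possible unboundedness of $Q$ could cause trouble. Fix any $v_0\in Q$. Since $g(v_0,\cdot)$ is proper, convex and lower semicontinuous, it admits a continuous affine minorant, $g(v_0,w)\ge\langle\xi,w\rangle+\beta$ for suitable $\xi\in\mathcal H$, $\beta\in\mathbb R$; combined with monotonicity this gives $g(w,v_0)\le-g(v_0,w)\le-\langle\xi,w\rangle-\beta$. Expanding the quadratic term, $\frac1\alpha\langle v_0-w,w-u\rangle=-\frac1\alpha\|w\|^2+\frac1\alpha\langle v_0+u,w\rangle-\frac1\alpha\langle v_0,u\rangle$, hence $h(w,v_0)\le-\frac1\alpha\|w\|^2+O(\|w\|)\to-\infty$ as $\|w\|\to\infty$ with $w\in Q$. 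In particular the set $\{w\in Q:h(w,v_0)\ge0\}$ is bounded.

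With these facts available I would run the standard KKM argument. For $v\in Q$ set $F(v)=\{w\in Q:h(w,v)\ge0\}$ and $G(v)=\{w\in Q:h(v,w)\le0\}$. Monotonicity gives $F(v)\subseteq G(v)$; each $G(v)$ is weakly closed because $h(v,\cdot)$ is convex and l.s.c. (hence weakly l.s.c.) and $Q$ is closed convex; and convexity of $h(w,\cdot)$ with $h(w,w)=0$ shows that $\{F(v)\}_{v\in Q}$, hence also $\{G(v)\}_{v\in Q}$, is a KKM family (if some $\bar w\in\mathrm{conv}\{v_1,\dots,v_n\}$ missed every $F(v_i)$, convexity would force $0=h(\bar w,\bar w)<0$). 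To get compactness I restrict to $Q_R:=Q\cap\{x:\|x-u\|\le R\}$ with $R$ chosen large enough that $\{w\in Q:h(w,v_0)\ge0\}$ lies strictly inside that ball and $v_0\in Q_R$; then $Q_R$ is weakly compact, so Ky Fan's intersection theorem applied to $\{G(v)\cap Q_R\}_{v\in Q_R}$ yields $\bar w\in Q_R$ with $h(v,\bar w)\le0$ for all $v\in Q_R$. The convexity/upper-hemicontinuity trick upgrades this to $h(\bar w,v)\ge0$ for all $v\in Q_R$: for $v\in Q_R$ and $t\in(0,1]$ put $v_t=tv+(1-t)\bar w\in Q_R$; from $h(v_t,\bar w)\le0$ and $0=h(v_t,v_t)\le t\,h(v_t,v)+(1-t)h(v_t,\bar w)$ we get $h(v_t,v)\ge0$, and letting $t\downarrow0$ with $(A_3)$ gives $h(\bar w,v)\ge0$. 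Taking $v=v_0$ shows $\bar w\in\{w\in Q:h(w,v_0)\ge0\}$, which sits strictly inside the ball, so $\bar w$ is an interior point of $\{x:\|x-u\|\le R\}$; consequently for an arbitrary $v\in Q$ we have $v_t=tv+(1-t)\bar w\in Q_R$ for all small $t>0$, whence $h(\bar w,v_t)\ge0$, and convexity of $h(\bar w,\cdot)$ with $h(\bar w,\bar w)=0$ gives $h(\bar w,v)\ge0$. Thus $\bar w\in Q$ satisfies $g(\bar w,v)+\frac1\alpha\langle v-\bar w,\bar w-u\rangle\ge0$ for all $v\in Q$, which is the claim.

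The main obstacle, as flagged, is the coercivity step forced by the possible unboundedness of $Q$; once that one-line estimate is in place the remainder is a verbatim transcription of the Fan--KKM existence proof for monotone equilibrium problems. (If one prefers to invoke the Blum--Oettli existence theorem as a black box, the proof collapses to checking $(A_1)$--$(A_4)$ for $h$ together with the coercivity inequality $h(w,v_0)\to-\infty$ above.)
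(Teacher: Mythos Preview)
Your proof sketch is correct and follows the classical Blum--Oettli argument: regularize $g$ to a strongly monotone bifunction $h$, secure coercivity via the quadratic term and an affine minorant of $g(v_0,\cdot)$, and run the Fan--KKM/Minty argument on a weakly compact truncation $Q_R$ before extending to all of $Q$. Each step checks out, including the delicate uses of $(A_3)$ in the Minty upgrade and of the strict interiority of $\bar w$ to pass from $Q_R$ to $Q$.

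As to comparison with the paper: there is nothing to compare. The paper does not prove this lemma at all; it merely states it with a citation to Blum and Oettli \cite{BO} and moves on. What you have written is precisely (a faithful outline of) the proof one finds in that reference, so your approach is not different from the paper's---it simply fills in what the paper chose to quote as a known result.
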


\begin{lemma}(\cite{CH})\label{2.3}
Under assumptions of Lemma~\ref{2.2}. Then the mapping $T_\alpha^g$ defined on $\mathcal{H}$ as follows
\begin{equation*}\label{}
T_\alpha^g(u)=\Big\{w\in Q: g(w, v)+\displaystyle\frac{1}{\alpha}\langle v-w, w-u\rangle\geq 0\;\;\forall v\in Q\Big\},
\end{equation*}
has following properties:\\
\indent (i) $T_\alpha^g$ is single-valued;\\
\indent (ii) $T_\alpha^g$ is firmly nonexpansive, i.e., for any $u, v\in \mathcal{H}$,
$$\Vert T_\alpha^g(u)-T_\alpha^g(v)\Vert^2\leq\langle T_\alpha^g(u)-T_\alpha^g(v), u-v\rangle;$$
\indent (iii) Fix$(T_\alpha^g) = \text{Sol}(Q, g)$;\\
\indent (iv) Sol$(Q, g)$ is closed and convex.
\end{lemma}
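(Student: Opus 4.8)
The plan is to prove the four assertions in sequence, using only Lemma~\ref{2.2} (which guarantees that $T_\alpha^g(u)$ is nonempty for every $u\in\mathcal H$ and every $\alpha>0$) together with the monotonicity hypothesis $(A_2)$, and, for the last assertion, an elementary triangle-inequality argument. Throughout, note that $T_\alpha^g$ is defined on all of $\mathcal H$ and takes its values in the closed convex set $Q$, so every fixed point of $T_\alpha^g$ automatically lies in $Q$, and the elements $w_1,w_2$ occurring below are legitimate test points in the variational inequalities. For (i), suppose $w_1,w_2\in T_\alpha^g(u)$; I would substitute $v=w_2$ into the inequality defining $w_1$ and $v=w_1$ into the one defining $w_2$, and add the two. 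The inner-product terms combine to $-\frac1\alpha\|w_1-w_2\|^2$, leaving $g(w_1,w_2)+g(w_2,w_1)\ge\frac1\alpha\|w_1-w_2\|^2$; since $(A_2)$ forces the left-hand side to be $\le 0$, this gives $w_1=w_2$, so $T_\alpha^g$ is single-valued.

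For (ii), put $w_i=T_\alpha^g(u_i)$ for $i=1,2$ and repeat the cross-substitution: take $v=w_2$ in the relation for $w_1$ and $v=w_1$ in the relation for $w_2$, and add. After expanding $\langle w_2-w_1,(w_1-u_1)-(w_2-u_2)\rangle$ and using $(A_2)$ once more to discard $g(w_1,w_2)+g(w_2,w_1)$, one is left with exactly $\|w_1-w_2\|^2\le\langle w_1-w_2,\,u_1-u_2\rangle$, which is firm nonexpansiveness; in particular $T_\alpha^g$ is nonexpansive, hence continuous. The only delicate point here is the sign bookkeeping, since the $u$-dependence no longer cancels the way it did in (i).

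For (iii) both inclusions are immediate. If $u\in\text{Sol}(Q,g)$, then $u\in Q$ and $g(u,v)\ge0$ for all $v\in Q$; substituting the candidate $w=u$ into the relation defining $T_\alpha^g(u)$ annihilates the inner-product term, so $u$ satisfies that relation, and by (i) this means $T_\alpha^g(u)=u$. Conversely, if $T_\alpha^g(u)=u$, then $u\in Q$ and, substituting $w=u$ into the defining inequality and again killing the inner-product term, $g(u,v)\ge0$ for all $v\in Q$; that is, $u\in\text{Sol}(Q,g)$. Hence $\text{Fix}(T_\alpha^g)=\text{Sol}(Q,g)$.

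Finally, for (iv) it suffices, in view of (iii), to show that $\text{Fix}(T_\alpha^g)$ is closed and convex. Closedness is immediate from the continuity established in (ii). For convexity, take $p,q\in\text{Fix}(T_\alpha^g)$, $\lambda\in(0,1)$, and $z=\lambda p+(1-\lambda)q\in Q$; using nonexpansiveness together with $T_\alpha^g(p)=p$ and $T_\alpha^g(q)=q$,
$$\|p-q\|\le\|p-T_\alpha^g(z)\|+\|T_\alpha^g(z)-q\|\le\|p-z\|+\|z-q\|=\|p-q\|,$$
so all the inequalities are in fact equalities. The equality case of the triangle inequality in the strictly convex space $\mathcal H$ forces $T_\alpha^g(z)$ to lie on the segment $[p,q]$, and matching $\|p-T_\alpha^g(z)\|=\|p-z\|=(1-\lambda)\|p-q\|$ identifies it as $z$ itself, so $z\in\text{Fix}(T_\alpha^g)$. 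I do not expect any single hard obstacle: the core of the argument is the monotonicity trick used identically in (i) and (ii); the two places to be careful are remembering that $T_\alpha^g$ is $Q$-valued (so that fixed points lie in $Q$ and the test points are admissible) and the equality-case analysis in part (iv).
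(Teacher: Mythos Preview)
Your argument is correct in all four parts: the cross-substitution plus monotonicity $(A_2)$ cleanly gives (i) and (ii), part (iii) is immediate, and your triangle-equality argument for (iv) is a valid way to show convexity of the fixed-point set of a nonexpansive map in a strictly convex space. Note, however, that the paper does not supply its own proof of this lemma at all --- it is quoted from Combettes--Hirstoaga \cite{CH} and stated without proof --- so there is no paper argument to compare against; your write-up simply fills in the standard proof that the cited reference contains.
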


\begin{lemma}(\cite{He})\label{2.3a}
Under assumptions of Lemma~\ref{2.3}. Then for $\alpha, \beta >0$ and $u, v\in \mathcal{H}$, one has
$$\Vert T_\alpha^g(u)-T_\beta^g(v)\Vert\leq\Vert v-u\Vert+\displaystyle\frac{|\beta-\alpha|}{\beta}\Vert T_\beta^g(v)-v\Vert.$$
\end{lemma}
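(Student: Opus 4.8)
The plan is to insert the intermediate resolvent $T_\alpha^g(v)$ and split the distance by the triangle inequality,
$$\|T_\alpha^g(u)-T_\beta^g(v)\|\leq \|T_\alpha^g(u)-T_\alpha^g(v)\|+\|T_\alpha^g(v)-T_\beta^g(v)\|.$$
By Lemma~\ref{2.3}(ii) the map $T_\alpha^g$ is firmly nonexpansive, hence $1$-Lipschitz, so the first term is bounded by $\|u-v\|=\|v-u\|$. Everything then reduces to the resolvent-type estimate
$$\|T_\alpha^g(v)-T_\beta^g(v)\|\leq \frac{|\beta-\alpha|}{\beta}\,\|T_\beta^g(v)-v\|$$
for the single point $v$; note that $T_\alpha^g(v)$ and $T_\beta^g(v)$ are well defined elements of $Q$ by Lemmas~\ref{2.2} and~\ref{2.3}(i).

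To prove this estimate, set $p=T_\alpha^g(v)$ and $q=T_\beta^g(v)$. By the definitions of $T_\alpha^g$ and $T_\beta^g$, test the variational inequality characterizing $p$ at the feasible point $v'=q\in Q$ and the one characterizing $q$ at $v'=p\in Q$:
$$g(p,q)+\tfrac{1}{\alpha}\langle q-p,\,p-v\rangle\geq 0,\qquad g(q,p)+\tfrac{1}{\beta}\langle p-q,\,q-v\rangle\geq 0.$$
Adding the two inequalities and using the monotonicity $g(p,q)+g(q,p)\leq 0$ (Assumption $(A_2)$) cancels the bifunction terms and leaves $\big\langle q-p,\ \tfrac{1}{\alpha}(p-v)-\tfrac{1}{\beta}(q-v)\big\rangle\geq 0$. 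Multiplying through by $\alpha\beta>0$ and rewriting $\beta(p-v)=\beta(p-q)+\beta(q-v)$ turns this into
$$\beta\,\|p-q\|^2\leq (\beta-\alpha)\,\langle q-p,\,q-v\rangle\leq |\beta-\alpha|\,\|p-q\|\,\|q-v\|,$$
whence (the case $p=q$ being trivial) $\|p-q\|\leq \frac{|\beta-\alpha|}{\beta}\|q-v\|$, as claimed. Combining this with the triangle-inequality split completes the proof.

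The argument is essentially routine; the one point requiring a little care is the bookkeeping of the scalars $1/\alpha$ and $1/\beta$, so that after clearing denominators one splits $\beta(p-v)$ (rather than $\alpha(q-v)$) through $p-q$, which produces the constant $|\beta-\alpha|/\beta$ and not $|\beta-\alpha|/\alpha$. Beyond the monotonicity of $g$ and the single-valuedness and nonexpansiveness supplied by Lemma~\ref{2.3}, no further property of $g$ is used: the estimate is purely algebraic.
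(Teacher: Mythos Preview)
Your argument is correct. Note, however, that the paper does not supply its own proof of this lemma: it is quoted from \cite{He} and stated without proof in the present paper, so there is no in-paper argument to compare against. Your route via the triangle inequality through $T_\alpha^g(v)$, the nonexpansiveness of $T_\alpha^g$, and the resolvent comparison $\|T_\alpha^g(v)-T_\beta^g(v)\|\le\frac{|\beta-\alpha|}{\beta}\|T_\beta^g(v)-v\|$ obtained by adding the two defining inequalities and using monotonicity is the standard derivation (and is essentially how the result is proved in \cite{He}). The algebra is clean and the bookkeeping remark about splitting $\beta(p-v)$ rather than $\alpha(q-v)$ is exactly the point that fixes the constant as $|\beta-\alpha|/\beta$.
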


\section{Weak convergence theorem}
\begin{theorem}\label{wct}
Let $C$, $Q$ be two nonempty closed convex subsets in $\mathcal{H}_1$ and $\mathcal{H}_2$, respectively. Let $S: C \to  C$; $T: Q \to Q$ be nonexpansive mappings, and bifunctions $g$, $f$ satisfy assumptions $A$, assumptions $B$, respectively.  Let $A: \mathcal{H}_1 \to \mathcal{H}_2$ be a bounded linear operator with its adjoint $A^*$. Take $x^1\in C$; $\{\lambda_k\}\subset [a, b]$, for some $a, b \in \Big(0, \min\Big\{\displaystyle\frac{1}{2c_1}, \displaystyle\frac{1}{2c_2}\Big\}\Big)$; $0 < \alpha < 1$; $0 < \mu < \displaystyle \frac{1}{\Vert A\Vert^2}$; $\{\alpha_k\} \subset (0,  + \infty)$ with $\liminf \limits_{k \longrightarrow +\infty}\alpha_k>0$, and consider the sequences
$\{x^k\}$, $\{y^k\}$, $\{z^k\}$, $\{t^k\}$, and $\{u^k\}$ defined by
\begin{equation}\label{}
\begin{cases}
                                   \mbox{$y^k=\arg\min\Big\{\lambda_k f(x^k, y)+\displaystyle\frac{1}{2}\Vert y-x^k\Vert^2: y\in C\Big\}$},\\
                                   \mbox{$z^k=\arg\min\Big\{\lambda_k f(y^k, y)+\displaystyle\frac{1}{2}\Vert y-x^k\Vert^2: y\in C\Big\}$},\\
									\mbox{$t^k=(1-\alpha)z^k+\alpha Sz^k$},\\
\mbox{$u^k=T_{\alpha_k}^gAt^k$},\\
\mbox{$x^{k+1}=P_C(t^k+\mu A^*(Tu^k-At^k))$}.
                \end{cases}
\end{equation}
If $\Omega = \{x^* \in \text{Sol}(C, f) \cap \text{Fix}(S): Ax^* \in \text{Fix}(Q, g) \cap \text{Fix}(T) \} \neq \emptyset$, then the sequences  $\{x^k\}$ and $\{z^k\}$ converge weakly to an element $p \in \Omega$ and $\{u^k\}$ converges weakly to $Ap \in \text{Sol}(Q, g) \cap \text{Fix}(T)$.
\end{theorem}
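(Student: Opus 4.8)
The plan is to follow the classical Fejér-monotonicity scheme for extragradient methods combined with the split-feasibility averaging step. First I would establish the key descent estimate for the extragradient part: using the optimality conditions for $y^k$ and $z^k$ (subdifferential inequalities for the strongly convex subproblems), together with the Lipschitz-type condition $(B_5)$ and pseudomonotonicity $(B_2)$, one derives for any $x^*\in\mathrm{Sol}(C,f)$ the inequality
\begin{equation*}
\Vert z^k-x^*\Vert^2\le \Vert x^k-x^*\Vert^2-(1-2\lambda_kc_1)\Vert x^k-y^k\Vert^2-(1-2\lambda_kc_2)\Vert y^k-z^k\Vert^2 .
\end{equation*}
This is the standard extragradient lemma (as in \cite{Anh,MQH}); the choice $\lambda_k\in[a,b]\subset(0,\min\{1/(2c_1),1/(2c_2)\})$ makes the last two coefficients positive. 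Next, since $S$ is nonexpansive and $x^*\in\mathrm{Fix}(S)$, Lemma~\ref{2.5} applied to $t^k=(1-\alpha)z^k+\alpha Sz^k$ gives $\Vert t^k-x^*\Vert^2\le\Vert z^k-x^*\Vert^2-\alpha(1-\alpha)\Vert z^k-Sz^k\Vert^2$.

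Second I would handle the split step $x^{k+1}=P_C\bigl(t^k+\mu A^*(Tu^k-At^k)\bigr)$. Writing $w^k=t^k+\mu A^*(Tu^k-At^k)$ and using that $x^*\in C$ with $Ax^*\in\mathrm{Fix}(T)\cap\mathrm{Sol}(Q,g)=\mathrm{Fix}(T)\cap\mathrm{Fix}(T^g_{\alpha_k})$, I expand $\Vert w^k-x^*\Vert^2$ and exploit: (i) the firm nonexpansiveness of $T^g_{\alpha_k}$ (Lemma~\ref{2.3}(ii)) to control $\Vert u^k-Ax^*\Vert$ and produce a negative term $-\Vert u^k-At^k\Vert^2$; (ii) nonexpansiveness of $T$; (iii) the identity $\langle A^*(Tu^k-At^k),t^k-x^*\rangle=\langle Tu^k-At^k,At^k-Ax^*\rangle$. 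The condition $0<\mu<1/\Vert A\Vert^2$ is exactly what makes the resulting coefficients in front of $\Vert Tu^k-At^k\Vert^2$ and the projection-error term nonnegative. Combining with part (d) of Lemma~\ref{2.1} for the projection, I obtain
\begin{equation*}
\Vert x^{k+1}-x^*\Vert^2\le\Vert t^k-x^*\Vert^2-\mu(1-\mu\Vert A\Vert^2)\Vert Tu^k-At^k\Vert^2-\text{(other nonnegative terms)},
\end{equation*}
and chaining everything yields $\Vert x^{k+1}-x^*\Vert^2\le\Vert x^k-x^*\Vert^2-(\text{sum of nonnegative terms})$ for every $x^*\in\Omega$.

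Third, from this Fejér inequality I conclude that $\{\Vert x^k-x^*\Vert\}$ is nonincreasing, hence convergent, so $\{x^k\}$ is bounded and all the deficit terms tend to $0$: $\Vert x^k-y^k\Vert\to0$, $\Vert y^k-z^k\Vert\to0$ (so $\Vert x^k-z^k\Vert\to0$), $\Vert z^k-Sz^k\Vert\to0$, $\Vert u^k-At^k\Vert\to0$, $\Vert Tu^k-At^k\Vert\to0$, and consequently $\Vert x^{k+1}-t^k\Vert\to0$ and $\Vert x^{k+1}-x^k\Vert\to0$. Also $\Vert At^k-Ax^k\Vert\le\Vert A\Vert\Vert t^k-x^k\Vert\to0$. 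Then I take a weakly convergent subsequence $x^{k_j}\rightharpoonup p$; because $\Vert x^k-z^k\Vert\to0$ and $\Vert z^k-Sz^k\Vert\to0$, the demiclosedness of $I-S$ (via Opial, Lemma~\ref{2.4}) gives $p\in\mathrm{Fix}(S)$. To get $p\in\mathrm{Sol}(C,f)$ I use the optimality condition for $y^k$ together with $(B_3)$ joint weak continuity and $(B_4)$: passing to the limit along the subsequence in $\lambda_k f(y^k,y)+\langle y^k-x^k,\cdot\rangle$-type inequalities shows $f(p,y)\ge0$ for all $y\in C$ — this is the standard argument from \cite{Anh}. For the image point: $At^{k_j}\rightharpoonup Ap$ (bounded linear operators are weak-weak continuous and $\Vert t^k-x^k\Vert\to0$), $\Vert u^k-At^k\Vert\to0$ gives $u^{k_j}\rightharpoonup Ap$, and then $\Vert Tu^k-u^k\Vert\le\Vert Tu^k-At^k\Vert+\Vert At^k-u^k\Vert\to0$ with demiclosedness of $I-T$ yields $Ap\in\mathrm{Fix}(T)$; similarly $u^k=T^g_{\alpha_k}At^k$ with $\Vert u^k-At^k\Vert\to0$, $\liminf\alpha_k>0$, and the properties in Lemma~\ref{2.3} give $Ap\in\mathrm{Sol}(Q,g)$. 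Hence $p\in\Omega$.

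Finally, for uniqueness of the weak limit I invoke the standard Opial argument: if two subsequences converge weakly to $p$ and $p'$, both in $\Omega$, then since $\{\Vert x^k-p\Vert\}$ and $\{\Vert x^k-p'\Vert\}$ both converge, a double application of Opial's condition forces $p=p'$; thus the whole sequence $\{x^k\}$ converges weakly to $p\in\Omega$, and then $\{z^k\}$ (since $\Vert x^k-z^k\Vert\to0$) and $\{u^k\}$ (which converges weakly to $Ap$) follow. The main obstacle I expect is bookkeeping the many cross terms in the combined estimate of Step two — making sure every coefficient that must be nonnegative actually is, under the stated ranges of $\mu$, $\lambda_k$, $\alpha$; in particular the interplay between the firm nonexpansiveness of $T^g_{\alpha_k}$ and the gradient-projection step for the split part needs to be arranged so that no ``bad'' positive multiple of $\Vert u^k-At^k\Vert^2$ or $\Vert Tu^k-At^k\Vert^2$ survives. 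The equilibrium-limit argument (showing $f(p,y)\ge0$) is routine given $(B_3)$ and the boundedness already established.
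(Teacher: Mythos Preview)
Your outline is correct and matches the paper's proof essentially step for step: the same extragradient estimate (the paper cites it as Lemma~\ref{a1} from \cite{Anh}), the same Fej\'er inequality obtained from firm nonexpansiveness of $T_{\alpha_k}^g$, nonexpansiveness of $T$, and the projection step (yielding exactly $\Vert x^{k+1}-x^*\Vert^2\le\Vert t^k-x^*\Vert^2-\mu(1-\mu\Vert A\Vert^2)\Vert Tu^k-At^k\Vert^2-\mu\Vert u^k-At^k\Vert^2$), the same Opial/demiclosedness arguments for $p\in\mathrm{Fix}(S)$ and $Ap\in\mathrm{Fix}(T)$, and the same Opial uniqueness conclusion. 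The only spot where the paper is more explicit than your sketch is in showing $Ap\in\mathrm{Sol}(Q,g)$: because $\alpha_k$ varies you cannot directly apply Lemma~\ref{2.3}(iii) to a single resolvent, and the paper uses the comparison estimate of Lemma~\ref{2.3a} (He) to bound $\Vert T_\alpha^g Ap - T_{\alpha_k}^g At^k\Vert$ inside an Opial contradiction --- this is precisely where the hypothesis $\liminf_k\alpha_k>0$ is consumed, so make that step concrete when you write it out.
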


Before going to prove this theorem, let us recall the following result which was proved in \cite{Anh}
\begin{lemma}(\cite{Anh})\label{a1}
Suppose that $x^* \in \text{Sol}(C, f)$, $f(x, \cdot)$ is convex and subdifferentiable on $C$ for all $x \in C$, and $f$ is pseudomonotone on $C$. Then, we have:
\begin{itemize}
\item[(i)] $\lambda_k[f(x^k, y)-f(x^k, y^k)] \geq \langle y^k-x^k, y^k-y \rangle,\; \forall y \in C.$
\item[(ii)] $\Vert z^k - x^* \Vert^2 \leq \Vert x^k-x^*\Vert^2-(1-2\lambda_k c_1)\Vert x^k - y^k \Vert^2 - (1-2\lambda_k c_2) \Vert y^k - z^k \Vert^2,\;\;\forall k.$
\end{itemize}
\end{lemma}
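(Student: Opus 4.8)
The plan is to prove the two inequalities in Lemma~\ref{a1} separately, using the optimality characterizations of the minimizers $y^k$ and $z^k$.

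For part (i), I would start from the fact that $y^k$ solves the strongly convex program $\arg\min\{\lambda_k f(x^k,y)+\frac12\|y-x^k\|^2 : y\in C\}$. Writing $h(y)=\lambda_k f(x^k,\cdot)+\frac12\|y-x^k\|^2$, the optimality condition is $0\in\partial h(y^k)+N_C(y^k)$, where $N_C$ is the normal cone to $C$. Since $f(x^k,\cdot)$ is convex and subdifferentiable by hypothesis, there exists a subgradient $\xi\in\partial_2 f(x^k,y^k)$ such that $\lambda_k\xi+(y^k-x^k)+\nu=0$ for some $\nu\in N_C(y^k)$. The subgradient inequality then gives, for every $y\in C$,
\begin{equation*}
f(x^k,y)-f(x^k,y^k)\geq\langle\xi,y-y^k\rangle,
\end{equation*}
while the normal-cone membership gives $\langle\nu,y-y^k\rangle\leq0$. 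Substituting $\lambda_k\xi=-(y^k-x^k)-\nu$ and using both inequalities yields $\lambda_k[f(x^k,y)-f(x^k,y^k)]\geq\langle x^k-y^k,y-y^k\rangle=\langle y^k-x^k,y^k-y\rangle$, which is exactly (i).

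For part (ii), I would first derive the analogue of (i) for $z^k$ (replace $x^k$ by $y^k$ inside $f$, keeping $\frac12\|y-x^k\|^2$): for all $y\in C$,
\begin{equation*}
\lambda_k[f(y^k,y)-f(y^k,z^k)]\geq\langle z^k-x^k,z^k-y\rangle.
\end{equation*}
Setting $y=x^*\in\text{Sol}(C,f)$ here produces a bound involving $\langle z^k-x^k,z^k-x^*\rangle$, which I would expand via the identity $2\langle z^k-x^k,z^k-x^*\rangle=\|z^k-x^k\|^2+\|z^k-x^*\|^2-\|x^k-x^*\|^2$. The key to controlling the cross term $f(y^k,z^k)$ is the Lipschitz-type condition $(B_5)$: I would bound $f(y^k,z^k)$ below using $f(x^k,z^k)\leq f(x^k,y^k)+f(y^k,z^k)+c_1\|x^k-y^k\|^2+c_2\|y^k-z^k\|^2$ combined with inequality (i) evaluated at $y=z^k$. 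The pseudomonotonicity $(B_2)$ enters through $x^*$: since $f(x^*,y^k)\geq0$ (as $x^*$ solves EP$(C,f)$), we get $f(y^k,x^*)\leq0$, which kills the term $f(y^k,x^*)$ in the desired direction. Assembling these bounds and isolating $\|z^k-x^*\|^2$ gives the claimed Fejér-type estimate with the coefficients $(1-2\lambda_kc_1)$ and $(1-2\lambda_kc_2)$.

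The main obstacle will be the careful bookkeeping in part (ii): one must chain together inequality (i) at $y=z^k$, the Lipschitz-type inequality $(B_5)$, and the pseudomonotone sign condition in exactly the right order so that the coefficients of $\|x^k-y^k\|^2$ and $\|y^k-z^k\|^2$ come out as $c_1$ and $c_2$ after multiplying through by $\lambda_k$. The signs are delicate — the pseudomonotonicity must be applied to the correct pair $(x^*,y^k)$ rather than $(x^*,z^k)$ — and a single misplaced term changes whether the final coefficients are $(1-2\lambda_kc_i)$ or something weaker. Everything else reduces to the projection/normal-cone identities and the elementary polarization formula, which are routine.
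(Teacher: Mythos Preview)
Your argument is correct and is exactly the standard proof of this extragradient estimate. Note, however, that the paper itself does not prove Lemma~\ref{a1}; it is quoted directly from \cite{Anh}, so there is no in-paper proof to compare against. Your plan matches the proof in \cite{Anh}: part (i) follows from the first-order optimality condition $0\in\lambda_k\partial_2 f(x^k,y^k)+(y^k-x^k)+N_C(y^k)$ exactly as you describe, and part (ii) is obtained by writing the analogous optimality inequality for $z^k$ at $y=x^*$, using pseudomonotonicity to drop $f(y^k,x^*)\le 0$, bounding $-f(y^k,z^k)$ via $(B_5)$ and part (i) at $y=z^k$, and then expanding both inner products with the polarization identities so that the $\|z^k-x^k\|^2$ terms cancel. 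The delicate sign-tracking you flag is real but your ordering is the right one.
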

Now, let us prove Theorem~\ref{wct}.

\begin{proof}
Take $x^* \in \Omega$, i.e., $x^* \in \text{Sol}(C, f) \cap \text{Fix}(S)$ and $Ax^* \in \text{Sol}(Q, g) \cap \text{Fix}(T)$. By definition of $t^k$, we have
\begin{equation}\notag
\begin{aligned}
\Vert t^k-x^*\Vert&=\Vert(1-\alpha)z^k+\alpha Sz^k-x^*\Vert\\
&=\Vert (1-\alpha)(z^k-x^*)+\alpha(Sz^k-Sx^*)\Vert\\
&\leq (1-\alpha)\Vert z^k-x^*\Vert+\alpha\Vert Sz^k-Sx^*\Vert\\
&\leq (1-\alpha)\Vert z^k-x^*\Vert+\alpha\Vert z^k-x^*\Vert\\
&=\Vert z^k-x^*\Vert.
\end{aligned}
\end{equation}
Since $\{\lambda_k\}\subset [a, b]\subset\Big(0, \min\Big\{\displaystyle\frac{1}{2c_1}, \displaystyle\frac{1}{2c_2}\Big\}\Big)$ and Lemma~\ref{a1}, we get
\begin{equation}\notag
\begin{aligned}
\Vert z^k-x^*\Vert^2&\leq\Vert x^k-x^*\Vert^2-(1-2\lambda_k c_1)\Vert x^k-y^k\Vert^2-(1-2\lambda_k c_2)\Vert y^k-z^k\Vert^2\\
&\leq\Vert x^k-x^*\Vert^2.
\end{aligned}
\end{equation}
Thus
\begin{equation}\label{p1}
\Vert t^k-x^*\Vert\leq\Vert z^k-x^*\Vert\leq\Vert x^k-x^*\Vert.
\end{equation}
it follows from Lemma~\ref{2.3} that
\begin{equation}\notag
\begin{aligned}
\Vert T_{\alpha_k}^gAt^k-Ax^*\Vert^2&=\Vert T_{\alpha_k}^gAt^k-T_{\alpha_k}^gAx^*\Vert^2\\
&\leq\langle T_{\alpha_k}^gAt^k-T_{\alpha_k}^gAx^*, At^k-Ax^*\rangle\\
&=\langle T_{\alpha_k}^gAt^k-Ax^*, At^k-Ax^*\rangle\\
&=\displaystyle\frac{1}{2}\Big[\Vert T_{\alpha_k}^gAt^k-Ax^*\Vert^2+\Vert At^k-Ax^*\Vert^2
-\Vert T_{\alpha_k}^gAt^k-At^k\Vert^2\Big].
\end{aligned}
\end{equation}
Hence
$$\Vert T_{\alpha_k}^gAt^k-Ax^*\Vert^2\leq\Vert At^k-Ax^*\Vert^2-\Vert T_{\alpha_k}^gAt^k-At^k\Vert^2.$$
Combining this fact with the nonexpansiveness of the mapping  $T$, one has
\begin{equation}\label{p2}
\begin{aligned}
\Vert Tu^k-Ax^*\Vert^2&=\Vert TT_{\alpha_k}^gAt^k-TAx^*\Vert^2\\
&\leq\Vert T_{\alpha_k}^gAt^k-Ax^*\Vert^2\\
&\leq\Vert At^k-Ax^*\Vert^2-\Vert T_{\alpha_k}^gAt^k-At^k\Vert^2.
\end{aligned}
\end{equation}
Using \eqref{p2}, we obtain
\begin{equation}\label{p3}
\begin{aligned}
\langle A(t^k-x^*), Tu^k-At^k\rangle&=\langle A(t^k-x^*)+Tu^k-At^k-(Tu^k-At^k), Tu^k-At^k\rangle\\
&=\langle Tu^k-Ax^*, Tu^k-At^k\rangle-\Vert Tu^k-At^k\Vert^2\\
&=\displaystyle\frac{1}{2}\Big[\Vert Tu^k-Ax^*\Vert^2+\Vert Tu^k-At^k\Vert^2-\Vert At^k-Ax^*\Vert^2\Big]-\Vert Tu^k-At^k\Vert^2\\
&=\displaystyle\frac{1}{2}\Big[(\Vert Tu^k-Ax^*\Vert^2-\Vert At^k-Ax^*\Vert^2)-\Vert Tu^k-At^k\Vert^2\Big]\\
&\leq -\displaystyle\frac{1}{2}\Vert T_{\alpha_k}^gAt^k-At^k\Vert^2-\displaystyle\frac{1}{2}\Vert Tu^k-At^k\Vert^2.
\end{aligned}
\end{equation}
It implies from \eqref{p1} and \eqref{p3} that
\begin{equation}\label{a}
\begin{aligned}
\Vert x^{k+1}-x^*\Vert^2&=\Vert P_C(t^k+\mu A^*(Tu^k-At^k))-P_C(x^*)\Vert^2\\
&\leq\Vert (t^k-x^*)+\mu A^*(Tu^k-At^k)\Vert^2\\
&=\Vert t^k-x^*\Vert^2+\Vert \mu A^*(Tu^k-At^k)\Vert^2+2\mu\langle t^k-x^*, A^*(Tu^k-At^k)\rangle\\
&\leq\Vert t^k-x^*\Vert^2+\mu^2\Vert A^*\Vert^2\Vert Tu^k-At^k\Vert^2+2\mu\langle A(t^k-x^*), Tu^k-At^k\rangle\\
&\leq\Vert t^k-x^*\Vert^2+\mu^2\Vert A^*\Vert^2\Vert Tu^k-At^k\Vert^2-\mu\Vert T_{\alpha_k}^gAt^k-At^k\Vert^2-\mu\Vert Tu^k-At^k\Vert^2\\
&=\Vert t^k-x^*\Vert^2-\mu(1-\mu\Vert A^*\Vert^2)\Vert Tu^k-At^k\Vert^2-\mu\Vert T_{\alpha_k}^gAt^k-At^k\Vert^2\\
&=\Vert t^k-x^*\Vert^2-\mu(1-\mu\Vert A^*\Vert^2)\Vert Tu^k-At^k\Vert^2-\mu\Vert u^k-At^k\Vert^2\\
&\leq\Vert x^k-x^*\Vert^2-\mu(1-\mu\Vert A^*\Vert^2)\Vert Tu^k-At^k\Vert^2-\mu\Vert u^k-At^k\Vert^2\\
&=\Vert x^k-x^*\Vert^2-\mu(1-\mu\Vert A\Vert^2)\Vert Tu^k-At^k\Vert^2-\mu\Vert u^k-At^k\Vert^2.
\end{aligned}
\end{equation}
In view of \eqref{p1}, \eqref{a} and $0<\mu<\displaystyle\frac{1}{\Vert A\Vert^2}$, yields
\begin{equation}\label{p4}
\Vert x^{k+1}-x^*\Vert\leq\Vert t^k-x^*\Vert\leq\Vert z^k-x^*\Vert\leq\Vert x^k-x^*\Vert,
\end{equation}
and
\begin{equation}\label{p5}
\mu(1-\mu\Vert A\Vert^2)\Vert Tu^k-At^k\Vert^2+\mu\Vert u^k-At^k\Vert^2\leq\Vert x^k-x^*\Vert^2-\Vert x^{k+1}-x^*\Vert^2.
\end{equation}
Since \eqref{p4}, we conclude that $\lim\limits_{k \to +\infty} \Vert x^k-x^* \Vert$ exists. So, we receive  from \eqref{p5} that,
\begin{equation}\label{p6}
\begin{aligned}
\lim\limits_{k \to +\infty}\Vert x^k-x^*\Vert&=\lim\limits_{k \to +\infty}\Vert t^k-x^*\Vert=\lim\limits_{k\to +\infty}\Vert z^k-x^*\Vert, \text{ and } \\
\lim\limits_{k \to +\infty}\Vert Tu^k-At^k\Vert&=\lim\limits_{k \to  +\infty}\Vert u^k-At^k\Vert=0.
\end{aligned}
\end{equation}
From \eqref{p6} and the inequality
$$\Vert Tu^k-u^k\Vert\leq \Vert Tu^k-At^k\Vert+\Vert u^k-At^k\Vert,$$
we get
\begin{equation}\label{p7}
\lim\limits_{k\longrightarrow +\infty}\Vert Tu^k-u^k\Vert=0.
\end{equation}
Besides that,
$$\Vert z^k-x^*\Vert^2 \leq\Vert x^k-x^*\Vert^2-(1-2\lambda_k c_1)\Vert x^k-y^k\Vert^2-(1-2\lambda_k c_2)\Vert y^k-z^k\Vert^2,$$
so
\begin{equation}\label{p8}
(1-2\lambda_k c_1)\Vert x^k-y^k\Vert^2+(1-2\lambda_k c_2)\Vert y^k-z^k\Vert^2\leq\Vert x^k-x^*\Vert^2-\Vert z^k-x^*\Vert^2.
\end{equation}
Since $\{\lambda_k\}\subset [a, b]\subset\Big(0, \min\Big\{\displaystyle\frac{1}{2c_1}, \displaystyle\frac{1}{2c_2}\Big\}\Big)$ and \eqref{p8}, one has
\begin{equation}\label{p9}
\begin{aligned}
(1-2bc_1)\Vert x^k-y^k\Vert^2\leq&\Vert x^k-x^*\Vert^2-\Vert z^k-x^*\Vert^2,\\
(1-2bc_2)\Vert y^k-z^k\Vert^2\leq&\Vert x^k-x^*\Vert^2-\Vert z^k-x^*\Vert^2.
\end{aligned}
\end{equation}
From \eqref{p6}, we get $\lim\limits_{k \to +\infty}(\Vert x^k-x^*\Vert^2-\Vert z^k-x^*\Vert^2)=0$. \\
Combining this fact with \eqref{p9}, yields
\begin{equation}\label{p10}
\lim\limits_{k\longrightarrow +\infty}\Vert x^k-y^k\Vert=0,\;\;\lim\limits_{k\longrightarrow +\infty}\Vert y^k-z^k\Vert=0.
\end{equation}
It is clear that
$$\Vert z^k-x^k \Vert \leq \Vert x^k-y^k \Vert + \Vert y^k-z^k \Vert, $$
so, we get from \eqref{p10} that
\begin{equation}\label{p11}
\lim\limits_{k \to +\infty}\Vert z^k-x^k\Vert=0.
\end{equation}
Using $t^k=(1-\alpha)z^k+\alpha Sz^k$, Lemma~\ref{2.5} and the nonexpansiveness of $S$, we have
\begin{equation}\label{b}
\begin{aligned}
\Vert t^k-x^*\Vert^2&=\Vert(1-\alpha)z^k+\alpha Sz^k-x^*\Vert^2\\
&=\Vert (1-\alpha)(z^k-x^*)+\alpha(Sz^k-x^*)\Vert^2\\
&=(1-\alpha)\Vert z^k-x^*\Vert^2+\alpha\Vert Sz^k-x^*\Vert^2-\alpha(1-\alpha)\Vert Sz^k-z^k\Vert^2\\
&=(1-\alpha)\Vert z^k-x^*\Vert^2+\alpha\Vert Sz^k-Sx^*\Vert^2-\alpha(1-\alpha)\Vert Sz^k-z^k\Vert^2\\
&\leq (1-\alpha)\Vert z^k-x^*\Vert^2+\alpha\Vert z^k-x^*\Vert^2-\alpha(1-\alpha)\Vert Sz^k-z^k\Vert^2\\
&=\Vert z^k-x^*\Vert^2-\alpha(1-\alpha)\Vert Sz^k-z^k\Vert^2.
\end{aligned}
\end{equation}
Therefore,
$$\alpha(1-\alpha)\Vert Sz^k-z^k\Vert^2\leq\Vert z^k-x^*\Vert^2-\Vert t^k-x^*\Vert^2.$$
Combining the last inequality with \eqref{p6}, we conclude that
\begin{equation}\label{p12}
\lim\limits_{k\longrightarrow +\infty}\Vert Sz^k-z^k\Vert=0.
\end{equation}
In addition,
\begin{equation}\notag
\begin{aligned}
\Vert t^k-x^k\Vert&\leq \Vert t^k-z^k\Vert+\Vert z^k-x^k\Vert\\
&=\alpha \Vert Sz^k-z^k \Vert + \Vert z^k-x^k \Vert.
\end{aligned}
\end{equation}
Therefore, we receive from \eqref{p11} and \eqref{p12} that
\begin{equation}\label{p13}
\lim\limits_{k\longrightarrow +\infty}\Vert t^k-x^k\Vert=0.
\end{equation}

Because $\lim\limits_{k \to +\infty}\Vert x^k-x^*\Vert$ exists, $\{x^k\}$ is bounded. Consequently, there exists a subsequence $\{x^{k_j}\}$ of
$\{x^k\}$ such that $x^{k_j}$ converges weakly to some $p \in C$ as $j \to +\infty$. Then, it follows from \eqref{p13} that $t^{k_j}\rightharpoonup p$, $At^{k_j}\rightharpoonup Ap$.  \\
Since $\lim\limits_{k\longrightarrow +\infty}\Vert u^k-At^k\Vert=0$, we deduce that $u^{k_j} \rightharpoonup Ap$. Remember that $\{u^k\} \subset Q$, so $Ap \in Q$.\\
In addition, $\lim\limits_{k \to +\infty}\Vert z^k-x^k\Vert=0$ and $x^{k_j}\rightharpoonup p$. Hence, $z^{k_j}\rightharpoonup p$.\\

If $Sp \neq p$, then, by Opial's condition and \eqref{p12}, we have
\begin{equation}\notag
\begin{aligned}
\liminf\limits_{j\longrightarrow +\infty}\Vert z^{k_j}-p\Vert&<\liminf\limits_{j\longrightarrow +\infty}\Vert z^{k_j}-Sp\Vert\\
&=\liminf\limits_{j\longrightarrow +\infty}\Vert z^{k_j}-Sz^{k_j}+Sz^{k_j}-Sp\Vert\\
&\leq\liminf\limits_{j\longrightarrow +\infty}(\Vert z^{k_j}-Sz^{k_j}\Vert+\Vert Sz^{k_j}-Sp\Vert)\\
&=\liminf\limits_{j\longrightarrow +\infty}\Vert Sz^{k_j}-Sp\Vert\\
&\leq\liminf\limits_{j\longrightarrow +\infty}\Vert z^{k_j}-p\Vert,
\end{aligned}
\end{equation}
this is a contradiction. So $Sp=p$, i.e., $p \in \text{Fix}(S)$.\\
From Lemma~\ref{a1}
$$\lambda_{k_j}[f(x^{k_j}, y)-f(x^{k_j}, y^{k_j})]\geq\langle y^{k_j}-x^{k_j}, y^{k_j}-y\rangle,\;\forall y\in C.$$
Letting $j \to +\infty$, we get $f(p, y) \geq 0$ for all $y \in C$. It means that $p \in \text{Sol}(C, f)$. \\
Therefore,
\begin{equation}\label{p14}
p \in \text{Sol}(C, f) \cap \text{Fix}(S).
\end{equation}
Next, we need showing that $Ap \in \text{Sol}(Q, g) \cap \text{Fix}(T)$. \\
Indeed, if $TAp \neq Ap$, then by Opial's condition and \eqref{p7}, we have
\begin{equation}\notag
\begin{aligned}
\liminf\limits_{j \to +\infty}\Vert u^{k_j}-Ap\Vert&<\liminf\limits_{j \to +\infty}\Vert u^{k_j}-TAp\Vert\\
&=\liminf\limits_{j \to +\infty}\Vert u^{k_j}-Tu^{k_j}+Tu^{k_j}-TAp\Vert\\
&\leq\liminf\limits_{j \to +\infty}(\Vert u^{k_j}-Tu^{k_j}\Vert+\Vert Tu^{k_j}-TAp\Vert)\\
&=\liminf\limits_{j \to +\infty}\Vert Tu^{k_j}-TAp\Vert\\
&\leq\liminf\limits_{j \to +\infty}\Vert u^{k_j}-Ap\Vert,
\end{aligned}
\end{equation}
this is a contradiction. Thus, $Ap \in \text{Fix}(T)$.\\
On the other hand, $\text{Sol}(Q, g)= \text{Fix}(T_\alpha^g)$. So, if $T_r^gAp \neq Ap$, then combining with \eqref{p6}, Opial's condition, and Lemma~\ref{2.3a}, we have
\begin{equation}\notag
\begin{aligned}
\liminf\limits_{j \to +\infty}\Vert At^{k_j}-Ap\Vert&<\liminf\limits_{j \to +\infty}\Vert At^{k_j}-T_\alpha^gAp\Vert\\
&=\liminf\limits_{j \to +\infty}\Vert At^{k_j}-u^{k_j}+u^{k_j}-T_\alpha^gAp\Vert\\
&\leq\liminf\limits_{j \to +\infty}(\Vert At^{k_j}-u^{k_j}\Vert+\Vert T_\alpha^gAp-u^{k_j}\Vert)\\
&=\liminf\limits_{j \to +\infty}\Vert T_\alpha^gAp-u^{k_j}\Vert\\
&=\liminf\limits_{j \to +\infty}\Vert T_\alpha^gAp-T_{\alpha_{k_j}}^gAt^{k_j}\Vert\\
&\leq\liminf\limits_{j \to +\infty}\Big\{\Vert At^{k_j}-Ap\Vert+\displaystyle\frac{|\alpha_{k_j}-\alpha|}{\alpha_{k_j}}\Vert T_{\alpha_{k_j}}^gAt^{k_j}-At^{k_j}\Vert\Big\}\\
&=\liminf\limits_{j \to +\infty}\Big\{\Vert At^{k_j}-Ap\Vert+\displaystyle\frac{|\alpha_{k_j}-\alpha|}{\alpha_{k_j}}\Vert u^{k_j}-At^{k_j}\Vert\Big\}\\
&=\liminf\limits_{j \to +\infty}\Vert At^{k_j}-Ap\Vert.
\end{aligned}
\end{equation}
this is a contradiction. Thus $Ap \in \text{Fix}(T_\alpha^g) = \text{Sol}(Q, g)$.\\
 Therefore,
\begin{equation}\label{p15}
Ap \in \text{Sol}(Q, g) \cap \text{Fix}(T).
\end{equation}
From \eqref{p14} and \eqref{p15}, we obtain $p \in \Omega$.\\
Finally, we prove $\{x^k\}$ converges weakly to $p$. Otherwise, there exists a subsequence $\{x^{m_i}\}$ of $\{x^k\}$ such that $x^{m_i} \rightharpoonup q \in \Omega$ with $q \neq p$, then by Opial's condition, yields
\begin{equation}\notag
\begin{aligned}
\liminf\limits_{i \to +\infty}\Vert x^{m_i} - q \Vert & <\liminf\limits_{i \to  +\infty}\Vert x^{m_i} - p\Vert\\
&=\liminf\limits_{j \to +\infty}\Vert x^{k_j} - p \Vert\\
&<\liminf\limits_{j \to +\infty}\Vert x^{k_j}-q \Vert\\
&=\liminf\limits_{i \to +\infty}\Vert x^{m_i}-q\Vert.
\end{aligned}
\end{equation}
This is a contradiction. Hence $\{x^k\}$ converges weakly to $p$. \\Together with \eqref{p11} and \eqref{p13}, we also get $z^k\rightharpoonup p$ and $t^k \rightharpoonup p$, so $At^k \rightharpoonup Ap$. Combining with \eqref{p6}, it is immediate that $u^k \rightharpoonup Ap \in \text{Sol}(Q, g) \cap \text{Fix}(T)$.
\end{proof}

When $S = I_{\mathcal{H}_1}$ and $T = I_{\mathcal{H}_2}$, the problem (SEPNM) reduces to the split equilibrium problem (SEP). In this case, Theorem~\ref{wct} becomes

\begin{corollary}\label{corollary1}
Suppose that $g$, $f$ are bifunctions satisfying assumptions $A$ and assumptions $B$, respectively. Let $A: \mathcal{H}_1 \to \mathcal{H}_2$ be a bounded linear operator with its adjoint $A^*$. Take $x^1\in C$; $\{\lambda_k\} \subset [a, b]$, for some $a, b \in \Big(0, \min\Big\{\displaystyle\frac{1}{2c_1}, \displaystyle\frac{1}{2c_2}\Big\}\Big)$; $0 < \alpha < 1$; $0 < \mu < \displaystyle \frac{1}{\Vert A\Vert^2}$; $\{\alpha_k\} \subset (0,  + \infty)$ with $\liminf \limits_{k \to +\infty}\alpha_k>0$, and consider the sequences
$\{x^k\}$, $\{y^k\}$, $\{z^k\}$, and $\{u^k\}$ defined by

\begin{equation}\label{}
\begin{cases}
                                   \mbox{$y^k=\arg\min\Big\{\lambda_k f(x^k, y)+\displaystyle\frac{1}{2}\Vert y-x^k\Vert^2: y\in C\Big\}$},\\
                                   \mbox{$z^k=\arg\min\Big\{\lambda_k f(y^k, y)+\displaystyle\frac{1}{2}\Vert y-x^k\Vert^2: y\in C\Big\}$},\\
									
\mbox{$u^k=T_{\alpha_k}^gAz^k$},\\
\mbox{$x^{k+1}=P_C(z^k+\mu A^*(Tu^k-Az^k))$}.
                \end{cases}
\end{equation}
If  $\Omega = \{x^* \in \text{Sol}(C, f): Ax^* \in \text{Sol}(Q, g) \} \neq \emptyset$, then the sequences  $\{x^k\}$ and $\{z^k\}$ converge weakly to an element $p \in \Omega$ and $\{u^k\}$ converges weakly to $Ap \in \text{Sol}(Q, g)$.
\end{corollary}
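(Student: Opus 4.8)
The plan is to obtain Corollary~\ref{corollary1} as an immediate specialization of Theorem~\ref{wct}, taking $S = I_{\mathcal{H}_1}$ and $T = I_{\mathcal{H}_2}$. First I would note that the identity maps are nonexpansive and send $C$ into $C$, respectively $Q$ into $Q$, so every hypothesis that Theorem~\ref{wct} places on $S$ and $T$ is trivially met; the hypotheses on $f$, $g$, $A$ and on the parameters $\{\lambda_k\}$, $\alpha$, $\mu$, $\{\alpha_k\}$ are word for word the same in both statements.

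Next I would check that the five-sequence iteration of Theorem~\ref{wct} collapses to the four-sequence iteration of the corollary. With $S = I_{\mathcal{H}_1}$ we have $t^k = (1-\alpha)z^k + \alpha S z^k = (1-\alpha)z^k + \alpha z^k = z^k$ for all $k$, so $\{t^k\}$ coincides with $\{z^k\}$ and can be suppressed; substituting $t^k = z^k$ into the last two lines gives exactly $u^k = T_{\alpha_k}^g A z^k$ and $x^{k+1} = P_C\bigl(z^k + \mu A^*(Tu^k - A z^k)\bigr)$, which is the scheme stated in the corollary (and, since $T = I_{\mathcal{H}_2}$, the term $Tu^k$ is simply $u^k$). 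Likewise, $\text{Fix}(I_{\mathcal{H}_1}) = C$ and $\text{Fix}(I_{\mathcal{H}_2}) = Q$, so the requirements $x^* \in \text{Fix}(S)$ and $Ax^* \in \text{Fix}(T)$ are automatically satisfied, and the set $\Omega$ of Theorem~\ref{wct} reduces precisely to $\{x^* \in \text{Sol}(C, f): Ax^* \in \text{Sol}(Q, g)\}$, the $\Omega$ of the corollary. Assuming $\Omega \neq \emptyset$, Theorem~\ref{wct} then yields the weak convergence of $\{x^k\}$ and $\{z^k\}$ to some $p \in \Omega$ and the weak convergence of $\{u^k\}$ to $Ap \in \text{Sol}(Q, g)$, which is exactly the asserted conclusion.

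Because this is a pure specialization, I do not expect any genuine obstacle; the only point needing a moment's care is the bookkeeping that $t^k = z^k$, so that one is really entitled to rewrite the scheme of Theorem~\ref{wct} in the reduced form stated here, and that nothing in the hypotheses is lost when $S$ and $T$ are replaced by identities. If one wanted a self-contained argument instead, one could simply rerun the proof of Theorem~\ref{wct} with $t^k$ everywhere replaced by $z^k$, deleting the steps that estimate $\Vert S z^k - z^k \Vert$ and $\Vert t^k - x^k \Vert$ together with the Opial argument that produces $p \in \text{Fix}(S)$; but the specialization route is shorter, and it is the one I would take.
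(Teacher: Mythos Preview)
Your proposal is correct and follows exactly the paper's approach: the paper simply states that Corollary~\ref{corollary1} is Theorem~\ref{wct} in the special case $S = I_{\mathcal{H}_1}$, $T = I_{\mathcal{H}_2}$, without further argument. Your write-up supplies the routine bookkeeping ($t^k = z^k$, $\text{Fix}(S) = C$, $\text{Fix}(T) = Q$, so $\Omega$ reduces as claimed) that the paper leaves implicit.
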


\section{Strong convergence theorem}
\begin{theorem}\label{sct}

Let $x^1\in C_1= C$, consider sequences $\{x^k\}$, $\{y^k\}$, $\{z^k\}$, $\{t^k\}$ and $\{u^k\}$  generated by the following process
\begin{equation}\label{s1}
\begin{cases}
                                   \mbox{$y^k=\arg\min\Big\{\lambda_k f(x^k, y)+\displaystyle\frac{1}{2}\Vert y-x^k\Vert^2: y\in C\Big\}$},\\
                                   \mbox{$z^k=\arg\min\Big\{\lambda_k f(y^k, y)+\displaystyle\frac{1}{2}\Vert y-x^k\Vert^2: y\in C\Big\}$},\\
									\mbox{$t^k=(1-\alpha)z^k+\alpha Sz^k$},\\
\mbox{$u^k=T_{\alpha_k}^gAt^k$},\\
\mbox{$s^k=P_C(t^k+\mu A^*(Tu^k-At^k))$},\\
\mbox{$C_{k+1}=\{r\in C_k: \Vert s^k-r\Vert\leq\Vert t^k-r\Vert\leq\Vert x^k-r\Vert\}$},\\
\mbox{$x^{k+1}=P_{C_{k+1}}(x^1),\;\;\;k\in\Bbb{N^*}$}
                \end{cases}
\end{equation}
where $0 < \alpha <1$, $0 < \mu < \displaystyle\frac{1}{\Vert A\Vert^2}$, $\{\alpha_k\} \subset (0; +\infty)$ with $\liminf\limits_{k \to +\infty} \alpha_k>0$. Then under assumptions of Theorem~\ref{wct} and $\Omega=\{x^*\in \text{Sol}(C, f) \cap \text{Fix}(S): Ax^* \in \text{Sol}(Q, g) \cap \text{Fix}(T)\} \neq \emptyset$, the sequences $\{x^k\}$, $\{z^k\}$ converge strongly to an element $p \in \Omega$ and $\{u^k\}$ converges strongly to $Ap \in \text{Sol}(Q, g) \cap \text{Fix}(T)$.
\end{theorem}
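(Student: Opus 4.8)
The plan is to run the standard hybrid (CQ) projection argument, feeding it the estimates already obtained in the proof of Theorem~\ref{wct}. The key preliminary observation is that the chain of inequalities in that proof leading to \eqref{p4} used nothing about $x^{k+1}$ beyond the identity $x^{k+1}=P_C(t^k+\mu A^*(Tu^k-At^k))$; since the point $s^k$ in \eqref{s1} is defined by exactly this formula, the very same computation gives, for every $x^*\in\Omega$ and every $k$,
$$\|s^k-x^*\|\le\|t^k-x^*\|\le\|z^k-x^*\|\le\|x^k-x^*\|,$$
together with the ``energy'' inequality $\mu(1-\mu\|A\|^2)\|Tu^k-At^k\|^2+\mu\|u^k-At^k\|^2\le\|x^k-x^*\|^2-\|s^k-x^*\|^2$, and the estimates of Lemma~\ref{a1}(ii) and of \eqref{b} remain valid verbatim. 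I would then prove, by induction on $k$, that each $C_k$ is closed and convex (each of the two defining relations $\|s^k-r\|\le\|t^k-r\|$ and $\|t^k-r\|\le\|x^k-r\|$ is equivalent to a half-space condition on $r$) and that $\Omega\subset C_k$; the base case $\Omega\subset C_1=C$ is clear, and the induction step is precisely the displayed three-term inequality. Hence $x^{k+1}=P_{C_{k+1}}(x^1)$ is well defined.

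Next I would establish strong convergence of $\{x^k\}$. Because $C_{k+1}\subset C_k$ and $x^k=P_{C_k}(x^1)$, the numbers $\|x^k-x^1\|=d(x^1,C_k)$ increase with $k$ and are bounded above by $\|z-x^1\|$ for any fixed $z\in\Omega$, so $\lim_k\|x^k-x^1\|$ exists. For $m>k$ we have $x^m\in C_m\subset C_{k+1}$, so Lemma~\ref{2.1}(d) applied to $x^{k+1}=P_{C_{k+1}}(x^1)$ yields $\|x^m-x^{k+1}\|^2\le\|x^m-x^1\|^2-\|x^{k+1}-x^1\|^2$, whose right-hand side tends to $0$; thus $\{x^k\}$ is Cauchy and converges strongly to some $p\in C$. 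In particular $\|x^{k+1}-x^k\|\to0$, and since $x^{k+1}\in C_{k+1}$ the defining inequalities give $\|s^k-x^{k+1}\|\le\|t^k-x^{k+1}\|\le\|x^k-x^{k+1}\|\to0$, so $\|t^k-x^k\|\to0$ and $\|s^k-x^k\|\to0$, whence $t^k\to p$ and $s^k\to p$.

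With strong convergence in hand I would read off the asymptotic regularity of the remaining sequences. Fixing $x^*\in\Omega$, since $\|x^k-x^*\|\to\|p-x^*\|$ and $\|s^k-x^*\|\to\|p-x^*\|$, the energy inequality forces $\|Tu^k-At^k\|\to0$ and $\|u^k-At^k\|\to0$, hence $\|Tu^k-u^k\|\to0$; chaining Lemma~\ref{a1}(ii) with \eqref{b} and using $\lambda_k\le b<\min\{1/(2c_1),1/(2c_2)\}$ gives $\|x^k-y^k\|\to0$, $\|y^k-z^k\|\to0$ and $\|Sz^k-z^k\|\to0$, so $z^k\to p$ and $y^k\to p$, while $At^k\to Ap$ by continuity of $A$ and $u^k\to Ap$ (so $Ap\in Q$). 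The membership $p\in\Omega$ then follows as in Theorem~\ref{wct}, except that strong convergence and continuity replace Opial's condition: $Sp=p$ from $\|Sz^k-z^k\|\to0$ and continuity of $S$; $p\in\text{Sol}(C,f)$ by passing to the limit in Lemma~\ref{a1}(i) using $(B_3)$ and $\lambda_k\in[a,b]$; $T(Ap)=Ap$ from $\|Tu^k-u^k\|\to0$ and continuity of $T$; and $Ap\in\text{Sol}(Q,g)=\text{Fix}(T_r^g)$ for any fixed $r>0$ via Lemma~\ref{2.3a}, namely $\|T_r^gAp-u^k\|\le\|At^k-Ap\|+\frac{|\alpha_k-r|}{\alpha_k}\|u^k-At^k\|\to0$, where $\frac{|\alpha_k-r|}{\alpha_k}=|1-r/\alpha_k|$ is bounded because $\liminf_k\alpha_k>0$, so $u^k\to T_r^gAp$ and uniqueness of strong limits gives $T_r^gAp=Ap$. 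Finally, since $\Omega$ is closed and convex, setting $w=P_\Omega(x^1)$ and using $\Omega\subset C_{k+1}$ with $x^{k+1}=P_{C_{k+1}}(x^1)$ gives $\|x^{k+1}-x^1\|\le\|w-x^1\|$; letting $k\to\infty$ yields $\|p-x^1\|\le\|w-x^1\|$, and since $p\in\Omega$ this forces $p=P_\Omega(x^1)$. The statements about $\{z^k\}$ and $\{u^k\}$ are then immediate from $z^k\to p$ and $u^k\to Ap$.

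I expect the genuine work to be twofold: verifying that every estimate from the proof of Theorem~\ref{wct} transfers verbatim with $s^k$ playing the role of $x^{k+1}$ (this is the step where one must resist simply ``citing'' the earlier proof), and establishing the closedness–convexity of the sets $C_k$ together with $\Omega\subset C_k$, which is what makes the iterates well defined and carries all the geometric content. Everything downstream of ``$\{x^k\}$ is Cauchy'' is routine; in fact the passage to the limit that required Opial's condition in Theorem~\ref{wct} collapses to a one-line continuity argument once strong convergence is available.
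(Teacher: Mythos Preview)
Your proposal is correct and follows essentially the same route as the paper's proof: transfer the estimates from Theorem~\ref{wct} with $s^k$ in the role of $x^{k+1}$, show $\Omega\subset C_k$ and closed--convexity by induction, run the standard CQ Cauchy argument, and then read off asymptotic regularity to conclude $p\in\Omega$ by continuity rather than Opial. Two small remarks: your half-space observation for the convexity of $C_k$ is slicker than the paper's explicit computation via Lemma~\ref{2.5}, and your final identification $p=P_\Omega(x^1)$ is a bonus not asserted in the theorem (it needs the closedness and convexity of $\Omega$, which holds under the standing assumptions but is not stated in the paper).
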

\begin{proof}
Firstly, We claim that $C_k$ is a nonempty closed convex set for all $k \in \Bbb{N^*}$.
In fact, let $x^* \in \Omega$, it follows from \eqref{p1}, \eqref{a}, \eqref{b} that
\begin{equation}\label{s2}
\begin{aligned}
\Vert s^k-x^*\Vert^2&\leq\Vert t^k-x^*\Vert^2-\mu(1-\mu\Vert A\Vert^2)\Vert Tu^k-At^k\Vert^2-\mu\Vert u^k-At^k\Vert^2\\
&\leq\Vert z^k-x^*\Vert^2-\alpha(1-\alpha)\Vert Sz^k-z^k\Vert^2-\mu(1-\mu\Vert A\Vert^2)\Vert Tu^k-At^k\Vert^2\\
&\;\;\;\;\;\;\;\;\;\;\;\;\;\;\;\;\;\;\;\;\;\;-\mu\Vert u^k-At^k\Vert^2\\
&\leq\Vert x^k-x^*\Vert^2-\alpha(1-\alpha)\Vert Sz^k-z^k\Vert^2-\mu(1-\mu\Vert A\Vert^2)\Vert Tu^k-At^k\Vert^2\\
&\;\;\;\;\;\;\;\;\;\;\;\;\;\;\;\;\;\;\;\;\;\;-\mu\Vert u^k-At^k\Vert^2\\
\end{aligned}
\end{equation}
By the algorithm $0 < \mu< \displaystyle\frac{1}{\Vert A\Vert^2}$, and \eqref{s2}, we have
\begin{equation}\label{s3}
\Vert s^k-x^*\Vert\leq\Vert t^k-x^*\Vert\leq\Vert z^k-x^*\Vert\leq\Vert x^k-x^*\Vert,\;\;\forall k.
\end{equation}
Since $x^* \in C_1$ and \eqref{s3}, we get by induction that $x^* \in C_k$ for all $k \in \Bbb{N^*}$, i.e., $\Omega \subset C_k$, so $C_k \neq \emptyset$ for all $k$.\\
Define
$$D_k=\{r\in \mathcal{H}_1: \Vert s^k-r\Vert\leq\Vert t^k-r\Vert\leq\Vert x^k-r\Vert\}\; k\in \Bbb{N^*},$$
then $C_{k+1}=C_k \cap D_k$. Because $C_1$ and $D_k$ are closed for all $k$,  $C_k$ is closed for all $k$.\\
Next, we verify that $C_k$ is convex for all $k$. Indeed, let $r^1, r^2 \in C_{k+1}$ and $\lambda \in [0, 1]$, using Lemma~\ref{2.5}, we have
\begin{equation}\notag
\begin{aligned}
\Vert s^k-(\lambda r^1+(1-\lambda)r^2)\Vert^2&=\Vert\lambda(s^k-r^1)+(1-\lambda)(s^k-r^2)\Vert^2\\
&=\lambda\Vert s^k-r^1\Vert^2+(1-\lambda)\Vert s^k-r^2\Vert^2-\lambda(1-\lambda)\Vert r^1-r^2\Vert^2\\
&\leq\lambda\Vert t^k-r^1\Vert^2+(1-\lambda)\Vert t^k-r^2\Vert^2-\lambda(1-\lambda)\Vert r^1-r^2\Vert^2\\
&=\Vert t^k-(\lambda r^1+(1-\lambda)r^2)\Vert^2.
\end{aligned}
\end{equation}
So, $$\Vert s^k-(\lambda r^1+(1-\lambda)r^2)\Vert\leq\Vert t^k-(\lambda r^1+(1-\lambda)r^2)\Vert.$$
Similarly,  $$\Vert t^k-(\lambda r^1+(1-\lambda)r^2)\Vert\leq\Vert x^k-(\lambda r^1+(1-\lambda)r^2)\Vert.$$
Thus
$$\Vert s^k-(\lambda r^1+(1-\lambda)r^2)\Vert\leq\Vert t^k-(\lambda r^1+(1-\lambda)r^2)\Vert\leq\Vert x^k-(\lambda r^1+(1-\lambda)r^2)\Vert.$$
 Therefore,
$$\lambda r^1+(1-\lambda)r^2\in C_{k+1}.$$
Notice that $x^{k+1}\in C_{k+1}\subset C_k$ and $x^k=P_{C_k}(x^1)$, so
$$\Vert x^k-x^1\Vert\leq\Vert x^{k+1}-x^1\Vert,  \text{ for all } k.$$
In addition,  $x^{k+1}=P_{C_{k+1}}(x^1)$ and $x^* \in C_{k+1}$, it implies that
$$\Vert x^{k+1}-x^1 \Vert \leq \Vert x^*-x^1 \Vert.$$
Thus
$$\Vert x^k-x^1 \Vert \leq \Vert x^{k+1}-x^1 \Vert \leq \Vert x^*-x^1 \Vert, \;\forall k.$$
Therefore, $\lim\limits_{k \to +\infty}\Vert x^k-x^1\Vert$ exists, consequently $\{x^k\}$ is bounded. \\
Hence, $\{s^k\}$ and $\{t^k\}$ are also bounded.\\
For all $m>n$, we have $x^m \in C_m \subset C_n$, $x^n=P_{C_n}(x^1)$. Combining this fact with Lemma~\ref{2.1}, we get
$$\Vert x^m-x^n\Vert^2\leq\Vert x^m-x^1\Vert^2-\Vert x^n-x^1\Vert^2.$$
Since $\lim\limits_{k \to +\infty}\Vert x^k-x^1\Vert$ exists, it implies that $\{x^k\}$ is a Cauchy sequence, i.e.,
\begin{equation}\label{s3.t}
\lim_{k \to \infty}x^k = p.
\end{equation}
We need showing that $p \in \Omega$. From the definitions of $C_{k+1}$ and $x^{k+1}$, we have
$$\Vert s^k-x^{k+1}\Vert\leq\Vert t^k-x^{k+1}\Vert\leq\Vert x^k-x^{k+1}\Vert.$$
Thus
\begin{equation}\label{s4}
\begin{aligned}
\Vert s^k-x^k\Vert&\leq\Vert s^k-x^{k+1}\Vert+\Vert x^{k+1}-x^k\Vert\\
&\leq\Vert x^k-x^{k+1}\Vert+\Vert x^k-x^{k+1}\Vert\\
&=2\Vert x^k-x^{k+1}\Vert,
\end{aligned}
\end{equation}
and
\begin{equation}\label{s5}
\begin{aligned}
\Vert t^k-x^k\Vert&\leq\Vert t^k-x^{k+1}\Vert+\Vert x^{k+1}-x^k\Vert\\
&\leq\Vert x^k-x^{k+1}\Vert+\Vert x^k-x^{k+1}\Vert\\
&=2\Vert x^k-x^{k+1}\Vert.
\end{aligned}
\end{equation}
Combining with \eqref{s3.t}, we deduce from \eqref{s4} and \eqref{s5} that

\begin{equation}\label{s5s6}
\lim_{k \to \infty} \|s^k - x^k \| = \lim_{k \to \infty} \|t^k - x^k \| = 0.
\end{equation}
In view of \eqref{s2} and \eqref{s5s6}, one has
\begin{equation}\label{s6}
\begin{aligned}
\alpha(1-\alpha)\Vert& Sz^k-z^k\Vert^2+\mu(1-\mu\Vert A\Vert^2)\Vert Tu^k-At^k\Vert^2+\mu\Vert u^k-At^k\Vert^2\\
&\leq\Vert x^k-x^*\Vert^2-\Vert s^k-x^*\Vert^2\\
&=(\Vert x^k-x^*\Vert+\Vert s^k-x^*\Vert)(\Vert x^k-x^*\Vert-\Vert s^k-x^*\Vert)\\
&\leq\Vert x^k-s^k\Vert(\Vert x^k-x^*\Vert+\Vert s^k-x^*\Vert) \to 0 \text{ as } k \to \infty.
\end{aligned}
\end{equation}
Since $0 < \alpha < 1$ and $\mu \in (0, \frac{1}{\|A\|})$, we get from \eqref{s6} that
\begin{equation}\label{s7}
\begin{aligned}
\lim\limits_{k \to +\infty}\Vert Tu^k-At^k\Vert&=\lim\limits_{k \to +\infty}\Vert u^k-At^k\Vert=0, \text{ and }\\
\lim\limits_{k \to +\infty}\Vert Sz^k-z^k\Vert&=0.
\end{aligned}
\end{equation}
\eqref{s7} and the inequality
$$\Vert Tu^k-u^k\Vert\leq \Vert Tu^k-At^k\Vert+\Vert u^k-At^k\Vert$$
also imply that
\begin{equation}\label{s8}
\lim\limits_{k \to +\infty}\Vert Tu^k-u^k\Vert=0.
\end{equation}
From \eqref{p10}, \eqref{p11}, \eqref{p13} and $\lim\limits_{k \to +\infty}x^k=p$, we have
\begin{equation}\label{s9}
\lim\limits_{k \to +\infty}y^k=p,\;\;\lim\limits_{k \to +\infty}z^k=p,\;\;\lim\limits_{k \to +\infty}t^k=p.
\end{equation}
It follows from \eqref{s7} and \eqref{s9} that
\begin{equation}\label{}
\begin{aligned}
\Vert Sp-p\Vert&\leq\Vert Sp-Sz^k\Vert+\Vert Sz^k-z^k\Vert+\Vert z^k-p\Vert\\
&\leq\Vert p-z^k\Vert+\Vert Sz^k-z^k\Vert+\Vert z^k-p\Vert\\
&=2\Vert z^k-p\Vert+\Vert Sz^k-z^k\Vert \to 0, \text{ as } k \to \infty.
\end{aligned}
\end{equation}
So, $Sp = p$ i.e., $p \in \text{Fix}(S)$.\\
In the other side, we receive from Lemma~\ref{a1} that
$$\lambda_k[f(x^k, y) - f(x^k, y^k)]\geq\langle y^k - x^k, y^k-y\rangle, \;\forall y\in C.$$
Letting $k \to +\infty$, by the joint weak continuity of $f$, $\lim\limits_{k \to +\infty}x^k=p$ and \eqref{s9}, we get in the limit that
$$f(p, y)\geq 0 \text{ for all } y \in C.$$
It is immediate that $p \in \text{Sol}(C, f)$. Consequently,
\begin{equation}\label{s10}
p \in \text{Sol}(C, f) \cap \text{Fix}(S).
\end{equation}
Since \eqref{s9}, it implies that $\lim\limits_{k \to +\infty}At^k=Ap$. Combinating this with \eqref{s7}, one has
\begin{equation}\label{s11}
\lim\limits_{k\longrightarrow +\infty}u^k=Ap.
\end{equation}
From \eqref{s8}, \eqref{s11}, it yields
\begin{equation}\label{}
\begin{aligned}
\Vert TAp-Ap\Vert&\leq\Vert TAp-Tu^k\Vert+\Vert Tu^k-u^k\Vert+\Vert u^k-Ap\Vert\\
&\leq\Vert Ap-u^k\Vert+\Vert Tu^k-u^k\Vert+\Vert u^k-Ap\Vert\\
&=2\Vert u^k-Ap\Vert+\Vert Tu^k-u^k\Vert \to 0 \text{ as } k \to \infty.
\end{aligned}
\end{equation}
Hence, $TAp=Ap$, i.e., $Ap \in \text{Fix}(T)$.\\
In addition, we receive from Lemma~\ref{2.3a}, $\lim\limits_{k \to +\infty}At^k=Ap$, and \eqref{s7} that
\begin{equation*}\label{}
\begin{aligned}
\Vert T_\alpha^gAp-Ap\Vert&\leq\Vert T_\alpha^gAp-T_{\alpha_k}^gAt^k\Vert+\Vert T_{\alpha_k}^gAt^k-At^k\Vert+\Vert At^k-Ap\Vert\\
&=\Vert T_\alpha^gAp-T_{\alpha_k}^gAt^k\Vert+\Vert u^k-At^k\Vert+\Vert At^k-Ap\Vert\\
&\leq\Vert At^{k}-Ap\Vert+\displaystyle\frac{|\alpha_k-\alpha|}{\alpha_k}\Vert T_{\alpha_k}^gAt^k-At^k\Vert+\Vert u^k-At^k\Vert+\Vert At^k-Ap\Vert\\
&=2\Vert At^{k}-Ap\Vert+\displaystyle\frac{|\alpha_k-\alpha|}{\alpha_k}\Vert u^k-At^k\Vert+\Vert u^k-At^k\Vert \to 0 \text{ as } k \to \infty,
\end{aligned}
\end{equation*}
which implies that $T_\alpha^gAp=Ap$, namely $Ap \in \text{Fix}(T_\alpha^g) = \text{Sol}(Q, g)$. \\
So,
\begin{equation}\label{s12}
Ap \in \text{Sol}(Q, g) \cap \text{Fix}(T).
\end{equation}
From \eqref{s10} and \eqref{s12}, we get $p \in \Omega$. The proof is completed.
\end{proof}
The following corollary is immediate from Theorem~\ref{sct} when $S=I_{\mathcal{H}_1}$ and $T=I_{\mathcal{H}_2}$.
 \begin{corollary}\label{corollary2}
Let $g: Q \times Q \to \Bbb{R}$ be a bifunction satisfying assumptions $A$ and $f: C \times C \to \Bbb{R}$ be a bifunction satisfying assumptions $B$. Let $A: \mathcal{H}_1 \to \mathcal{H}_2$ be a bounded linear operator with its adjoint $A^*$. Choose $x^1 \in C$ and $C_1 = C$. Consider the sequences $\{x^k\}$, $\{y^k\}$, $\{z^k\}$ and $\{u^k\}$ generated by the following iteration
\begin{equation}\label{s1}
\begin{cases}
                                   \mbox{$y^k=\arg\min\Big\{\lambda_k f(x^k, y)+\displaystyle\frac{1}{2}\Vert y-x^k\Vert^2: y\in C\Big\}$},\\
                                   \mbox{$z^k=\arg\min\Big\{\lambda_k f(y^k, y)+\displaystyle\frac{1}{2}\Vert y-x^k\Vert^2: y\in C\Big\}$},\\
\mbox{$u^k=T_{\alpha_k}^gAz^k$},\\
\mbox{$s^k=P_C(z^k+\mu A^*(u^k-Az^k))$},\\
\mbox{$C_{k+1}=\{r\in C_k: \Vert s^k-r\Vert\leq\Vert z^k-r\Vert\leq\Vert x^k-r\Vert\}$},\\
\mbox{$x^{k+1}=P_{C_{k+1}}(x^1).$}
                \end{cases}
\end{equation}
where $0< \alpha <1$, $0 < \mu < \displaystyle \frac{1}{\Vert A\Vert^2}$, $\{\alpha_k\}\subset (0; +\infty)$ with $\liminf\limits_{k \to +\infty} \alpha_k>0$.  Suppose that
$\Omega=\{x^*\in \text{Sol}(C, f): Ax^* \in \text{Sol}(Q, g)\}\neq \emptyset$, then the sequences $\{x^k\}$ and $\{z^k\}$ converge strongly to an element $p \in \Omega$ and $\{u^k\}$ converges strongly to $Ap \in \text{Sol}(Q, g)$.
\end{corollary}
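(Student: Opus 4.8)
The plan is to deduce Corollary~\ref{corollary2} as the special case $S = I_{\mathcal{H}_1}$, $T = I_{\mathcal{H}_2}$ of Theorem~\ref{sct}, so the whole argument amounts to checking that the hypotheses transfer and that the iteration collapses correctly. First I would note that the identity maps $I_{\mathcal{H}_1} : C \to C$ and $I_{\mathcal{H}_2} : Q \to Q$ are nonexpansive, hence all the standing hypotheses of Theorem~\ref{sct} (equivalently, of Theorem~\ref{wct}) hold once $g$ satisfies Assumptions $A$, $f$ satisfies Assumptions $B$, and the parameters $\{\lambda_k\}$, $\alpha$, $\mu$, $\{\alpha_k\}$ are taken as in the corollary. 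Since $\text{Fix}(I_{\mathcal{H}_1}) = C$ and $\text{Fix}(I_{\mathcal{H}_2}) = Q$, the solution set occurring in Theorem~\ref{sct} specializes to
$$\Omega = \{x^* \in \text{Sol}(C, f) \cap C : Ax^* \in \text{Sol}(Q, g) \cap Q\} = \{x^* \in \text{Sol}(C, f) : Ax^* \in \text{Sol}(Q, g)\},$$
using $\text{Sol}(C, f) \subset C$ and $\text{Sol}(Q, g) \subset Q$; this is exactly the set assumed nonempty in the corollary.

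Next I would verify that the iterative process in Theorem~\ref{sct} reduces to the one stated here. With $S = I_{\mathcal{H}_1}$ one gets $t^k = (1-\alpha) z^k + \alpha S z^k = z^k$ for every $k$, so the auxiliary sequence $\{t^k\}$ merges with $\{z^k\}$ and drops out; with $T = I_{\mathcal{H}_2}$ one gets $T u^k = u^k$. Substituting $t^k = z^k$ and $T u^k = u^k$ into the lines defining $u^k$, $s^k$, $C_{k+1}$ and $x^{k+1}$ reproduces verbatim the recursion $u^k = T_{\alpha_k}^g A z^k$, $s^k = P_C(z^k + \mu A^*(u^k - A z^k))$, $C_{k+1} = \{r \in C_k : \|s^k - r\| \leq \|z^k - r\| \leq \|x^k - r\|\}$, $x^{k+1} = P_{C_{k+1}}(x^1)$ that appears in the statement.

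Finally, applying Theorem~\ref{sct} directly yields that $\{x^k\}$ and $\{z^k\}$ converge strongly to some $p \in \Omega$ and that $\{u^k\}$ converges strongly to $Ap \in \text{Sol}(Q, g) \cap \text{Fix}(I_{\mathcal{H}_2}) = \text{Sol}(Q, g)$, which is the claim. I do not expect any genuine obstacle here: the only substantive points are the two simplifications $t^k = z^k$ and $Tu^k = u^k$ together with the identification of $\Omega$. If a self-contained proof is preferred instead, one can simply rerun the proof of Theorem~\ref{sct} with these substitutions, which erases every term $\|Sz^k - z^k\|$ and $\|Tu^k - u^k\|$ and reduces the steps establishing $p \in \text{Fix}(S)$ and $Ap \in \text{Fix}(T)$ to trivialities, leaving the argument for $p \in \text{Sol}(C,f)$ and $Ap \in \text{Sol}(Q,g)$ unchanged.
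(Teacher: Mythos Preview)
Your proposal is correct and matches the paper's own approach: the paper states that the corollary is immediate from Theorem~\ref{sct} by taking $S=I_{\mathcal{H}_1}$ and $T=I_{\mathcal{H}_2}$, and you have spelled out exactly those identifications ($t^k=z^k$, $Tu^k=u^k$, and the reduction of $\Omega$). Nothing further is needed.
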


 {\bf Conclusion}.
We have proposed two algorithms for solving a split equilibrium and nonexpansive mapping SEPNM($C, Q, A, f, g, S, T)$ in Hilbert spaces. In which, the bifunction $f$ is pseudomonotone on $C$, the bifunction $g$ is monotone on $Q$ and $S, T$ are nonexpansive mappings. Then, we have proved that the iteration sequences generated by the algorithms converge weakly and strongly to a solution of this problem, respectively.

\begin{acknowledgements}
The first author is supported in part by NAFOSTED, under the project 101.01-2014-24.
\end{acknowledgements}

\bibliographystyle{spbasic}      
\bibliographystyle{spmpsci}      
\bibliographystyle{spphys}       


\end{document}